\documentclass[12pt,reqno]{amsart}

\usepackage{verbatim}

\newtheorem{thm}{Theorem}
\newtheorem{prop}[thm]{Proposition}
\newtheorem{cor}[thm]{Corollary}
\newtheorem{lem}[thm]{Lemma}
\newtheorem{citethm}{Theorem}

\theoremstyle{definition}
\newtheorem{rem}{Remark}
\newtheorem{ex}{Example}

\providecommand{\RR}{\mathbb{R}}

\providecommand{\NN}{\mathbb{N}}

\DeclareMathOperator{\id}{id}

\DeclareMathOperator{\ke}{Ker}
\DeclareMathOperator{\im}{Im}
\DeclareMathOperator{\sper}{Sper}
\DeclareMathOperator{\cof}{Cof}
\DeclareMathOperator{\supp}{supp}
\DeclareMathOperator{\ind}{Ind}

\def\mn{\mathcal{M}_n}
\def\mnr{\mathcal{M}_n(R)}
\def\mor{\mathcal{M}_1(R)}
\def\mtr{\mathcal{M}_2(R)}
\def\sn{\mathcal{S}_n}
\def\snr{\mathcal{S}_n(R)}
\def\skr{\mathcal{S}_k(R)}
\def\skmr{\mathcal{S}_{k-1}(R)}

\def\zn{\mathcal{Z}_n}
\def\znr{\mathcal{Z}_n(R)}
\def\idn{\mathbf{I}_n}
\def\idnm{\mathbf{I}_{n-1}
\def\id{\mathbf{I}}}
\def\M{\mathcal{M}}
\def\a{\mathbf{A}}
\def\b{\mathbf{B}}
\def\bb{\mathbf{b}}
\def\c{\mathbf{C}}
\def\d{\mathbf{D}}
\def\E{\mathbf{E}}
\def\e{\mathbf{e}}
\def\F{\mathbf{F}}
\def\gg{\mathbf{g}}
\def\h{\mathbf{H}}
\def\J{\mathcal{J}}
\def\S{\mathcal{G}}
\def\s{\mathbf{G}}
\def\T{\mathcal{T}}
\def\t{\mathbf{T}}
\def\vv{\mathbf{v}}
\def\m{\mathbf{M}}
\def\K{\mathfrak{K}}
\def\p{\mathbf{P}}

\def\R{\mathbf{R}}
\def\Q{\mathcal{Q}}
\def\X{\mathbf{X}}
\def\U{\mathfrak{U}}
\def\u{\mathbf{U}}
\def\V{\mathbf{V}}
\def\Z{\mathbf{Z}}
\begin{document}

\title[Real algebraic geometry for matrices]{Real algebraic geometry for matrices over commutative rings}

\author{J. Cimpri\v c}

\keywords{positive polynomials, matrix polynomials, Positivstellensatz, real algebraic geometry}

\subjclass{14P, 13J30, 47A56}

\date{\today}

\address{Jaka Cimpri\v c, University of Ljubljana, Faculty of Math. and Phys.,
Dept. of Math., Jadranska 19, SI-1000 Ljubljana, Slovenija. 
E-mail: cimpric@fmf.uni-lj.si. www page: http://www.fmf.uni-lj.si/ $\!\!\sim$cimpric.}

\begin{abstract}
We define and study preorderings and orderings on rings
of the form $M_n(R)$ where $R$ is a commutative unital ring. We extend the
Artin-Lang theorem and Krivine-Stengle Stellens\"atze (both abstract and geometric)
from $R$ to $M_n(R)$. This problem has been open since the seventies when 
Hilbert's 17th problem was extended from usual to matrix polynomials.
While the orderings of $M_n(R)$ are in one-to-one correspondence with
the orderings of $R$, this is not true for preorderings. Therefore, 
our theory is not Morita equivalent to the classical real algebraic geometry.
\end{abstract}

\maketitle

\thispagestyle{empty}

\section{Introduction}

Real algebraic geometry studies sets of the form
$$K_{\{g_1,\ldots,g_m\}}=\{x \in \RR^d \mid g_1(x) \ge 0,\ldots,g_m(x) \ge 0\},$$
where $d \in \NN$ and $g_1,\ldots,g_m \in \RR[x_1,\ldots,x_d]$, and the corresponding preorderings
$$T_{\{g_1,\ldots,g_m\}}=\{\sum_{\varepsilon \in \{0,1\}^m} c_\varepsilon
g_1^{\varepsilon_1} \cdots g_m^{\varepsilon_m} \mid c_{\varepsilon} \in \sum\RR[x_1,\ldots,x_d]^2\}.$$
Its most fundamental result is due to Krivine \cite{kr} and Stengle \cite{st}:

\begin{citethm}  
\label{theorema}
For every $f, g_1,\ldots,g_m \in \RR[x_1,\ldots,x_d]$ we have that:
\begin{enumerate}
\item $K_{\{g_1,\ldots,g_m\}}=\emptyset$ iff $-1 \in T_{\{g_1,\ldots,g_m\}}$.
\item  $f(x)>0$ for every $x \in K_{\{g_1,\ldots,g_m\}}$ iff there exist $t,t' \in T_{\{g_1,\ldots,g_m\}}$
such that $ft=1+t'$. 
\item $f(x) \ge 0$ for every $x \in K_{\{g_1,\ldots,g_m\}}$ iff there exist $t,t' \in T_{\{g_1,\ldots,g_m\}}$
and $k \in \NN$ such that $ft=f^{2k}+t'$. 
\item  $f(x) = 0$ for every $x \in K_{\{g_1,\ldots,g_m\}}$ iff there exists $k \in \NN$ such that 
$-f^{2k} \in T_{\{g_1,\ldots,g_m\}}$.
\end{enumerate}
\end{citethm}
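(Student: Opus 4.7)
The plan is to reduce everything to part (1) (the emptiness criterion) and then obtain (2), (4), (3) by elementary algebraic manipulations, using only one Rabinowitsch-style auxiliary variable. Part (1) itself rests on two pillars: an abstract Positivstellensatz asserting that any preordering of a commutative ring not containing $-1$ is an intersection of orderings extending it, and the Artin--Lang homomorphism theorem, which says that every ordering of a finitely generated $\RR$-algebra is induced by an $\RR$-algebra homomorphism to $\RR$.

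For (1), the backward direction is immediate: if $-1\in T_{\{g_1,\ldots,g_m\}}$, evaluation at any $x\in K_{\{g_1,\ldots,g_m\}}$ yields $-1\ge 0$, forcing $K$ to be empty. For the forward direction I would argue the contrapositive. Assume $-1\notin T:=T_{\{g_1,\ldots,g_m\}}$. By Zorn's lemma $T$ extends to a maximal proper preordering $P$ of $\RR[x_1,\ldots,x_d]$; the standard Serre--Krivine argument shows that $P\cup(-P)$ is the whole ring and $\mathfrak{p}:=P\cap(-P)$ is a prime ideal, so $P$ induces an ordering on the residue field $\kappa(\mathfrak{p})$. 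Passing to a real closure $R$ gives a point $\bar x\in R^d$ with $g_i(\bar x)\ge 0$ for all $i$; Artin--Lang (equivalently, Tarski's transfer principle) then descends $\bar x$ to a point $x\in\RR^d$ lying in $K$.

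Parts (2) and (4) follow from (1). For (2): $f>0$ on $K$ is equivalent to $K_{\{g_1,\ldots,g_m,-f\}}=\emptyset$, since asking for $f\le 0$ and $x\in K$ simultaneously is exactly asking for a common point of $g_i\ge 0$ and $-f\ge 0$. By (1), $-1$ lies in $T_{\{g_1,\ldots,g_m,-f\}}$; grouping terms by the exponent of $-f\in\{0,1\}$ gives $-1=t'-ft$ with $t,t'\in T_{\{g_1,\ldots,g_m\}}$, i.e.\ $ft=1+t'$. For (4): $f=0$ on $K$ is equivalent to the emptiness in $\RR^{d+1}$ of the set defined by $g_i\ge 0$ and $yf=1$; applying (1) in $\RR[x_1,\ldots,x_d,y]$ yields an identity $-1=\sum c_\varepsilon g^\varepsilon+h(yf-1)$ with each $c_\varepsilon$ a sum of squares in $y$. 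Substituting $y=1/f$ and multiplying through by a sufficiently high power $f^{2k}$ clears denominators while preserving the SOS structure (evenness keeps squares as squares), producing $-f^{2k}\in T_{\{g_1,\ldots,g_m\}}$. Finally, (3) follows by applying (4) inside $T_{\{g_1,\ldots,g_m,-f\}}$: since $f\ge 0$ on $K$ is equivalent to $f=0$ on $K_{\{g_1,\ldots,g_m,-f\}}$, we get $-f^{2k}\in T_{\{g_1,\ldots,g_m,-f\}}$, and regrouping by the exponent of $-f$ gives $ft=f^{2k}+t'$. The converses of (2)--(4) are all immediate by evaluation.

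The main obstacle is clearly the forward direction of (1): producing an $\RR$-point from the purely algebraic hypothesis $-1\notin T$. Both pillars hide real work: the abstract Positivstellensatz requires the Zorn--Serre--Krivine extension argument together with careful handling of the prime $P\cap(-P)$, and Artin--Lang either goes through model theory (quantifier elimination for real closed fields) or through a place-theoretic argument in the style of Lang's homomorphism theorem. A secondary technical care-point is the clearing-of-denominators step for (4), where one must verify that the power of $f$ used is even and large enough to leave the resulting expression genuinely inside $T_{\{g_1,\ldots,g_m\}}$.
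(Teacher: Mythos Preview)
The paper does not give its own proof of Theorem~A; it is quoted as the classical Krivine--Stengle theorem, with only the two-sentence outline ``abstract Stellens\"atze over the real spectrum, then Artin--Lang density'' and a pointer to the literature. Your sketch is a correct execution of that standard argument: part~(1) is exactly abstract-Positivstellensatz-plus-Artin--Lang, and your reductions of (2), (4), (3) to (1) by adjoining $-f$, a Rabinowitsch variable, and $-f$ again are the usual ones and are carried out correctly.
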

Assertion (2) is called the Positivstellensatz, assertion (3) the Nichtnegativstellensatz and
assertion (4) the semialgebraic Nullstellensatz.

The proof consists of two steps. Firstly, one defines the real spectrum of a commutative unital
ring and proves an abstract version of the Krivine-Stengle theorem. Secondly, one proves that the
real spectrum of $\RR[x_1,\ldots,x_d]$ contains $\RR^d$ as a dense subset (Artin-Lang Theorem).

Motivated by \cite{ss} and \cite{sch2}, we would like to extend 
this theory to matrix polynomials of fixed size $n \in \NN$, 
i.e. from the ring $\RR[x_1,\ldots,x_d]$ to the ring $\mn(\RR[x_1,\ldots,x_d])$
of all $n \times n$ matrices with entries from $\RR[x_1,\ldots,x_d]$.
We will consider sets of the form
$$K_{\{\s_1,\ldots,\s_m\}}=\{x \in \RR^d \mid \s_1(x) \succeq 0,\ldots,\s_m(x) \succeq 0\}$$
where $\s_1,\ldots,\s_m$ belong to the set $\sn(\RR[x_1,\ldots,x_d])$ of all symmetric
matrices from $\mn(\RR[x_1,\ldots,x_d])$
and ``$\succeq 0$'' means ``is positive semidefinite''.
In  Section \ref{sec2} we will define the corresponding preordering $T_{\{\s_1,\ldots,\s_m\}} \subseteq \sn(\RR[x_1,\ldots,x_d])$ 
as the smallest quadratic module in $\mn(\RR[x_1,\ldots,x_d])$ which contains $\s_1,\ldots,\s_m$ and whose intersection with
the set $\RR[x_1,\ldots,x_d]\cdot \idn$, where $\idn$ is the identity matrix, is closed under multiplication.
In Section \ref{sec6} we will prove the following generalization of Theorem \ref{theorema}.
 
\begin{citethm}  
\label{theoremb}
For every $\F,\s_1,\ldots,\s_m\in \sn(\RR[x_1,\ldots,x_d])$ we have that:
\begin{enumerate}
\item $K_{\{\s_1,\ldots,\s_m\}}=\emptyset$ iff $-\idn \in T_{\{\s_1,\ldots,\s_m\}}$.
\item  $\F(x)$ is positive definite for every $x \in K_{\{\s_1,\ldots,\s_m\}}$ iff there exist $\b,\b' \in T_{\{\s_1,\ldots,\s_m\}}$
such that $\F \b=\b \F =\idn+\b'$. 
\item $\F(x) \succeq 0$ for every $x \in K_{\{\s_1,\ldots,\s_m\}}$ iff there exist $\b,\b' \in T_{\{\s_1,\ldots,\s_m\}}$
and $k \in \NN$ such that $\F\b=\b\F=\F^{2k}+\b'$. 
\item  $\F(x) = 0$ for every $x \in K_{\{\s_1,\ldots,\s_m\}}$ iff there exists $k \in \NN$ such that 
$-\F^{2k} \in T_{\{\s_1,\ldots,\s_m\}}$.
\end{enumerate}
The element $\b$ from assertion (2) can always be chosen from the set $\RR[x_1,\ldots,x_d]\cdot \idn$
while the element $\b$ from assertion (3) cannot.
\end{citethm}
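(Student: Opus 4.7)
\textbf{Plan for Theorem~\ref{theoremb}.}

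I would mimic the two-step strategy the authors recall for Theorem~\ref{theorema}: first prove an abstract matrix Krivine-Stengle theorem on a suitable real spectrum of $\mn(R)$ for a general commutative unital ring $R$, then descend to $\RR^d$ in the polynomial case via a matrix analogue of the Artin-Lang theorem.

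The abstract step hinges on the correspondence (advertised in the abstract) between orderings of $\mn(R)$ and orderings of $R$: to an ordering $P$ of $R$ one attaches the matrix ordering consisting of those $\s\in\sn(R)$ whose image has only nonnegative eigenvalues over the real closure of the residue field at $P$. The standing hypothesis that $T\cap R\cdot\idn$ is closed under multiplication is exactly what turns $T\cap R\cdot\idn$ into a scalar preordering of $R$, so that the classical abstract Krivine-Stengle can be invoked on principal-minor and determinant witnesses of the geometric conditions. The four matrix certificates in (1)--(4) would then be reverse-engineered from the corresponding scalar certificates through the adjugate identity $\F\cof(\F)=\cof(\F)\F=\det(\F)\idn$ and Cayley-Hamilton.

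For the geometric step, a matrix Artin-Lang theorem would ensure that evaluation morphisms at points of $\RR^d$ are dense in the real spectrum of $\mn(\RR[x_1,\ldots,x_d])$. Since the ordering correspondence identifies the real spectrum of $\mn(R)$ essentially with $\sper(R)$, this should follow with only formal effort from the classical Artin-Lang. Combined with the abstract Stellensatz, this yields assertions (1)--(4) by translation.

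The delicate part, and what I expect to be the main obstacle, is the concluding dichotomy between (2) and (3). For the positive half of (2) --- that $\b$ may always be chosen from $R\cdot\idn$ --- the natural route is to apply the scalar Positivstellensatz to $\det(\F)$, which is strictly positive on $K$ whenever $\F$ is positive definite there, inside the scalar preordering $T\cap R\cdot\idn$; the adjugate identity then repackages the resulting scalar certificate $\det(\F)\,t=1+t'$ into a matrix certificate with $\b$ a scalar multiple of $\idn$. For the negative half of the statement about (3), an explicit counterexample is required: I would look for a symmetric $\F$ that is positive semidefinite on $K$ in a sufficiently degenerate way (for instance with eigenvalues vanishing at different rates along different directions of $K$), so that any purported scalar $\b=b\idn$ would force the single polynomial $b$ to satisfy mutually incompatible growth conditions on $K$.
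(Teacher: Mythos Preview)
Your overall two-step architecture (abstract matrix Stellens\"atze plus a matrix Artin-Lang density argument, via the ordering correspondence $\sper \mnr \cong \sper R$) is exactly the paper's route, and the Cayley-Hamilton idea for extracting matrix certificates in (3) and (4) from scalar ones is also what the paper does (working in $R[\lambda]/(\det(\F-\lambda\idn))$). One technical ingredient you gloss over is the reduction of the \emph{constraints} $\s_1,\ldots,\s_m$ to central ones: the paper's Proposition~\ref{prop:ks} constructs, for each finite $\S\subseteq\snr$, a finite $\tilde{\S} \subseteq \M_\S^n \cap \znr$ with $\K_\S^n=\K_{\tilde{\S}}^n$. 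Your ``principal-minor witnesses'' remark points in this direction, but naive principal minors of $\s_i$ neither lie in $\M_\S^n$ nor cut out the same locus of prime quadratic modules; the explicit Schur-complement identities (\ref{eq1})--(\ref{eq3}) are what make both hold simultaneously.

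The concrete gap is your plan for the positive half of the last claim. From $\det(\F)\,t=1+t'$ the adjugate identity gives $\F\cdot(t\,\cof\F)=(1+t')\idn$, but $t\,\cof\F$ is neither central nor visibly in $\T_\S^n$, so this does \emph{not} produce a certificate with $\b \in R\cdot\idn$ as you claim. The paper instead proves (1)$\Rightarrow$(3) in Theorem~\ref{thm1} by induction on $n$ via the Schur complement of $\F$: from a scalar Positivstellensatz certificate $(1+s_1)f_{11}=1+u_1$ for the top-left entry and the inductive hypothesis applied to $f_{11}\h-\gg^T\gg$, an explicit (and rather intricate) algebraic identity assembles a scalar multiplier $1+t$ with $(1+t)\F \in \idn + \T_{\tilde{\S}}^n$. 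Your $\det\F$-only approach does not yield this. For the negative half (no central $\b$ in (3)), the paper's counterexample is simpler than the growth-rate heuristic you sketch: Example~\ref{noncentral} takes $\F=\diag(x,1)$ with the single scalar constraint $x^3\ge 0$, and a direct divisibility argument in $\RR[x]$ rules out any central $\b$.
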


In Section \ref{sec6} we will also prove an abstract version of Theorem \ref{theoremb}
for rings of the form $\mn(R)$ where $R$ is a commutative unital 
ring and $n$ is a positive integer. The points and the topologies of the real spectrum 
of the ring $\mn(R)$ are defined in Sections \ref{sec5} and \ref{sec4} respectively.
We will prove that the real spectrum of $\mn(R)$ is homeomorphic to the real spectrum of $R$.
This result will imply a variant of the Artin-Lang Theorem 
for the ring $\mn(\RR[x_1,\ldots,x_d])$.

The main step in the proof  of Theorem \ref{theoremb} is the construction of polynomials
$h_1,\ldots,h_r \in \RR[x_1,\ldots,x_d]$ such that $K_{\{h_1  \idn,\ldots,h_r  \idn\}}=K_{\{\s_1,\ldots,\s_m\}}$
and $T_{\{h_1  \idn,\ldots,h_r  \idn\}} \subseteq T_{\{\s_1,\ldots,\s_m\}}$; see Proposition \ref{prop:ks}.
From Theorem \ref{theorema} (applied to $\det(\F-\lambda \idn)-(-\lambda)^n$ and $h_1,\ldots,h_r,-\lambda \in \RR[x_1,\ldots,x_d,\lambda]$)
 and the Cayley-Hamilton Theorem we then deduce a slightly stronger version of the theorem with
$T_{\{\s_1,\ldots,\s_m\}}$ replaced by $T_{\{h_1  \idn,\ldots,h_r  \idn\}}$.
The advantage of the  preordering $T_{\{h_1  \idn,\ldots,h_r  \idn\}}$ is that it is always finitely generated as a quadratic module
while the advantages of the preordering $T_{\{\s_1,\ldots,\s_m\}}$ are that it always contains $\s_1,\ldots,\s_m$ and that it is
uniquely determined by $\s_1,\ldots,\s_m$. Preorderings
of the form $T_{\{h_1  \idn,\ldots,h_r  \idn\}}$ were already considered in \cite[Chapter 4]{sch2} but their version of the matrix
Krivine-Stengle Theorem is not related to ours.


The last claim of Theorem \ref{theoremb} (about denominators in $\RR[x_1,\ldots,x_d]\cdot \idn$) does not follow from other claims. 
The first part is proved by induction on $n$ using Schur complements; see \cite{c}. For the second part see Example \ref{noncentral}.
It would be interesting to know if the element $\b$ from assertion (3) can be chosen from the set $\RR[x_1,\ldots,x_d]\cdot \idn$ if $m=0$
(i.e. in the case of no constraints)\footnotemark
\footnotetext{Note added in press. Alja\v z Zalar recently showed that the answer to this question is negative. 
I would like to thank him for allowing me to include his example here. For $\F$ he takes a diagonal 
$2 \times 2$ matrix with entries $p$ and $1$ where $p$ is a psd form such that every form $h$ for which 
$h^2 p$ is sos must have a zero (see \cite{rez}, the last paragraph in Section 2). Then he proceeds as in Example \ref{noncentral}.}. 
Then Theorem \ref{theoremb} would imply Artin's Theorem
for matrix polynomials (see \cite{gr} or  \cite{ps} or \cite[Proposition 10]{sch2}).

\section{Quadratic modules and preorderings}
\label{sec2}

Let $n$ be a positive integer, $R$ a commutative unital ring and $\mnr$
the ring of all $n \times n$ matrices over $R$ with transposition as the involution.
The unit of $\mnr$ is the identity matrix $\idn$ and the center $\znr$ of $\mnr$ 
is equal to $R \cdot \idn$.  We also assume that 
$R$ has the trivial involution, hence it can be identified as a $\ast$-ring 
with both $\znr$ and $\mor$. We will use the following notation:
lower case letters for elements of $R$, 
upper case letters for subsets of $R$,
bold lower case letters for vectors over $R$, 
bold upper case letters for matrices over $R$,
calligraphic letters for sets of matrices  and 
fracture letters for families of sets of matrices.

Let $\snr:=\{\a \in \mnr \mid \a^T=\a\}$ be the set of all symmetric $n \times n$
matrices over $R$. A subset $\M$ of $\snr$ is a \textit{quadratic module}
if $\idn \in \M$, $\M+\M \subseteq \M$ and $\a^T \M \a \subseteq \M$ for every $\a \in \mnr$.
The smallest quadratic module which contains a given subset $\S$ of $\snr$ will
be denoted by $\M^n_\S$. It consists of all finite sums of elements of the form
$\a^T \s \a$ where $\s \in \S \cup \{\idn\}$ and $\a \in \mnr$. In particular, a subset $M$ of $R$
is a quadratic module if $1 \in M$, $M+M \subseteq M$ and $r^2 M \subseteq M$ for every $r \in R$.
The smallest quadratic module in $R$ which contains a given subset $G$ of $R$ will be denoted by $M_G$.

Let $\E_{ij}$ be the coordinate matrices in $\mnr$, let $\e_i$ be the coordinate vectors in $R^n$
and let $p \colon \mnr \to R$ be the mapping defined by 
$$p(\a)=\e_1^T \a \e_1.$$

\begin{lem}
\label{plem}
\
\begin{enumerate}
\item For every quadratic module $\M$ in $\mnr$, we have that 
$\M \cap \znr=p(\M) \cdot \idn.$
\item For every subset $\S$ of $\snr$, we have that $p(\M_\S^n)=M_{\S'}$ 
where $\S'=\{\vv^T \s \vv \mid \s \in \S, \vv \in R^n\}$,
and so $\M_\S^n \cap \znr=M_{\S'} \cdot \idn$.
\end{enumerate}
\end{lem}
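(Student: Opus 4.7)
The plan is to prove (1) by a direct computation and then derive (2) via a quadratic-module-of-$R$ argument; the final equality of (2) is immediate by applying (1) to $\M = \M_\S^n$.

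For (1), I would handle the two inclusions separately. The inclusion $\M \cap \znr \subseteq p(\M) \cdot \idn$ is essentially formal: any central element of $\mnr$ equals $r \idn$ for some $r \in R$ and satisfies $p(r \idn) = r$. For the reverse inclusion, the key trick is to use the rank-one coordinate matrices: conjugating $\a \in \M$ by $\E_{1i}$ gives $\E_{1i}^T \a \E_{1i} = p(\a) \E_{ii} \in \M$, and summing over $i = 1, \dots, n$ yields $p(\a) \idn \in \M$.

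For (2), I would first verify $p(\M_\S^n) \subseteq M_{\S'}$ by applying $p$ to an arbitrary generator $\a^T \s \a$ of $\M_\S^n$: the result is $\vv^T \s \vv$ with $\vv = \a \e_1$, which is either an element of $\S'$ (when $\s \in \S$) or a sum of squares in $R$ (when $\s = \idn$), hence lies in $M_{\S'}$. For the reverse inclusion, I would check that $p(\M_\S^n)$ is itself a quadratic module of $R$: it contains $p(\idn) = 1$, it is closed under addition by linearity of $p$, and it is closed under $a \mapsto r^2 a$ because $(r \idn)^T \a (r \idn) = r^2 \a$ remains in $\M_\S^n$. Moreover, $p(\M_\S^n)$ contains every $\vv^T \s \vv \in \S'$, since $(\vv \e_1^T)^T \s (\vv \e_1^T) = (\vv^T \s \vv) \E_{11}$ lies in $\M_\S^n$ and $p$ recovers $\vv^T \s \vv$ from it. Minimality of $M_{\S'}$ then forces $M_{\S'} \subseteq p(\M_\S^n)$.

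The main obstacle is conceptual rather than technical: one has to recognize that $p(\M_\S^n)$ is a quadratic module of $R$ (not merely a set closed under the relevant operations) and select the right rank-one matrices ($\E_{1i}$ in one direction, $\vv \e_1^T$ in the other) so that conjugation inside $\mnr$ realizes exactly the generators required on the $R$ side. Once those two observations are in place, the remainder is routine bookkeeping.
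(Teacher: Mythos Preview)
Your proof is correct and essentially matches the paper's. Part (1) is identical; for part (2) the paper compresses both inclusions into a single chain of set equalities via the substitution $\vv_{ij}=\a_{ij}\e_1$, whereas you separate the two inclusions and invoke minimality of $M_{\S'}$ for the reverse one, but the underlying ideas (and the rank-one conjugations by $\E_{1i}$ and $\vv\e_1^T$) are the same.
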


\begin{proof}
(1) If $\a \in \M \cap \znr$, then $\a$ is a scalar matrix, hence $\a=p(\a) \idn$, so $\a \in p(\M)\idn$.
Conversely, if $\a \in p(\M)\idn$, then $\a$ is a scalar matrix, hence it belongs to $\znr$.
Moreover, for some $\m \in \M$, $\a=p(\m)\idn=\sum_{i=1}^n \E_{1i}^T \m \E_{1i} \in \M$.

(2) We have that $p(\M_\S^n)=\{ p(\m) \mid \m \in \M_\S \} 
= \{ p(\sum_{i,j}\a_{ij}^T \s_i \a_{ij}) \mid \s_i \in \S, \a_{ij} \in \mnr \}
= \{\sum_{i,j}\e_1^T \a_{ij}^T \s_i \a_{ij} \e_1 \mid \s_i \in \S, \a_{ij} \in \mnr \}
= \{ \sum_{i,j} \vv_{ij}^T \s_i \vv_{ij} \mid \s_i \in \S, \vv_{ij} \in R^n \}=M_{\S'}$ 
where $\S'=\{\vv^T \s \vv \mid \s \in \S, \vv \in R^n\}$. The last part now follows from (1).
\end{proof}

In particular, if we identify the ring $\znr=R \cdot \idn$ with the ring $R$, then the set $\M \cap \znr$,
which is a quadratic module in $\znr$, is identified with the set $p(\M)$, which is a 
quadratic module in $R$.

\begin{ex} Let $R=\RR[x,y]$, $n=2$ and $\S=\{\left[ \begin{array}{cc} x & 1 \\ 1 & y \end{array} \right]\}$. 
We claim that the quadratic module $p(\M^2_\S)$ in $R$ is not finitely generated.
\end{ex}

\begin{proof}
By Lemma \ref{plem}, $p(\M^2_\S)=M_{\S'}$ where $\S'=\{a^2 x+b^2 y+2 a b \mid a,b \in R\}$ is infinite.
Suppose that $M_{\S'}=M_G$ for some finite set $G \subseteq R$. We may assume that 
$G=\{g_0,g_1,\ldots,g_k\}$ with $g_0=1$.
Let $\succ_1$ and $\succ_2$ be the graded monomial orderings induced by $x \succ_1 y$ and $y \succ_2 x$
respectively. Since the elements of $G$ are nonnegative on $K_G=K_{\S'}=\{(x,y) \in \RR^2 \mid xy \ge 1\}$,
their leading coefficients with respect to $\succ_1$ or $\succ_2$ are all nonnegative.
It follows that $\deg(\sum_i t_i g_i)=\max_i \deg (t_i g_i)$ for any $t_i \in \sum R^2$.
Pick any $\alpha,\beta \in \RR$. Since $\alpha^2x+\beta^2y-2\alpha \beta \in \S' \subseteq M_G$,
there exist $t_i \in \sum R^2$ such that $\alpha^2x+\beta^2y-2\alpha \beta=t_0+t_1 g_1+\ldots+t_k g_k$.
The degree formula implies that $\deg(t_i g_i) \le 1$ for all $i$.
Since $t_i g_i \ge 0$ on $K_{\S'}$,
it follows that for every $i=1,\ldots,l$ there exist $\alpha_i,\beta_i \in \RR^+$
and $\gamma_i \in \RR$ such that $t_i g_i=\alpha_i^2 x+\beta_i^2 y+\gamma_i$ with $\gamma_i \ge -2\alpha_i \beta_i$.
By comparing coefficients, we get that $\alpha^2=\sum_{i=1}^l \alpha_i^2$,
$\beta^2=\sum_{i=1}^l \beta_i^2$ and $-2\alpha \beta=t_0+\sum_{i=1}^l \gamma_i\ge \sum_{i=1}^l(-2 \alpha_i \beta_i)$.
It follows that $\sqrt{\sum_{i=1}^l \alpha_i^2} \sqrt{\sum_{i=1}^l \beta_i^2}
\le \sum_{i=1}^l \alpha_i \beta_i$, hence $(\alpha_1,\ldots,\alpha_l)$ and $(\beta_1,\ldots,\beta_l)$
are colinear, $t_0=0$ and $\gamma_i=-2 \alpha_i \beta_i$.  Therefore, $\alpha^2x+\beta^2y-2\alpha \beta$
is a constant multiple of an element from $G$. In particular, $G$ is infinite, a contradiction.
\end{proof}

A subset $\T$ of the set $\snr$ is a \textit{preordering} if $\T$ is a quadratic module in $\mnr$
and the set $p(\T)$ is closed under multiplication. By Lemma \ref{plem} the set $p(\T)$ is closed
for multiplication iff the set $\T \cap \znr$ is closed under multiplication.
The smallest preordering in $\mnr$ containing a given set $\S \subseteq \snr$ will be denoted by $\T_\S^n$. 

In particular, a subset $T$ of $R$ is a preordering if $T+T \subseteq T$, $T \cdot T \subseteq T$
and $r^2 \in T$ for every $r \in  R$. The smallest preordering in $R$ which contains a given subset
$G$ of $R$ will be denoted by $T_G$.

\begin{ex}
\label{sos}
Let $\Sigma_n(R)$ be the set of all finite sums of elements of the form $\a^T \a$ where $\a \in \mnr$.
We have that $\T_\emptyset^n = \M_\emptyset^n = \Sigma_n(R)$. Moreover, for every ideal $I$ of $R$, 
$\T^n_{I \cdot \idn} = \M^n_{I \cdot \idn} = \T^n_{\sn(I)} = \M^n_{\sn(I)} = \Sigma_n(R)+\sn(I)$.
\end{ex}

\begin{proof}
To prove the first part, note that the set $p(\Sigma_n(R))$ consists
of all finite sums of squares of elements from $R$, hence it is closed under multiplication.
To prove the second part, we have to show that $\sn(I) \subseteq \M^n_{I \cdot \idn}$
and that $\Sigma_n(R)+\sn(I)$ is a preordering. Namely, the second claim implies that
$\T^n_{\sn(I)} \subseteq \Sigma_n(R)+\sn(I)$ while the first claim implies that
$\Sigma_n(R)+\sn(I) \subseteq \M^n_{I \cdot \idn}$. Other inclusions are clear.

Pick $a\in I$ and note that for any positive integers $i$ and $j$,
$a \E_{ii}=\E_{ii}^T (a \idn) \E_{ii} \in \M^n_{I \cdot \idn}$
and $a(\E_{ij}+\E_{ji})=(\E_{ii}+\E_{ij})^T (a  \idn) (\E_{ii}+\E_{ij})
+\E_{ii}^T (-a \idn) \E_{ii} + \E_{jj}^T (-a \idn) \E_{jj}\in \M^n_{I \cdot \idn}$.
It follows that $\sn(I) \subseteq \M^n_{I \cdot \idn}$.
Let $\pi \colon R \to R/I$ be the canonical mapping and let the mapping
$\pi_n \colon \mnr \to \mn(R/I)$ be defined by $\pi_n([a_{ij}])=[\pi(a_{ij})]$.
Since $\pi_n$ is a $\ast$-homomorphism, $(\pi_n)^{-1}(\Sigma_n(R/I))=\Sigma_n(R)+\mn(I)$.
By the first part, $\Sigma_n(R/I)$ is a preordering and so $\Sigma_n(R)+\sn(I)$
is also a preordering.
\end{proof}

\begin{lem}
For every subset $\S$ of $\snr$, we have that
$$\T_\S^n=\M^n_{\S \cup (\prod \S' \cdot \idn)}$$ where $\prod \S'$ is the set of all finite products of elements 
from $$\S'=\{\vv^T \s \vv \mid \s \in \S, \vv \in R^n\}.$$
\end{lem}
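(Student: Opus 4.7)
Write $\Q:=\M^n_{\S \cup (\prod \S' \cdot \idn)}$. The plan is to prove the two inclusions $\Q \subseteq \T_\S^n$ and $\T_\S^n \subseteq \Q$ separately, in each case reducing the problem to a statement about the associated quadratic module $p(\,\cdot\,)$ in $R$ via Lemma~\ref{plem}.

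For the first inclusion, it suffices to observe that $\T_\S^n$ is a quadratic module containing $\S \cup (\prod \S' \cdot \idn)$. Indeed, $\S \subseteq \T_\S^n$ by definition. Since $\T_\S^n \supseteq \M_\S^n$, Lemma~\ref{plem}(2) gives $\S' \subseteq p(\T_\S^n)$, and because $\T_\S^n$ is a preordering, $p(\T_\S^n)$ is closed under multiplication, so $\prod \S' \subseteq p(\T_\S^n)$. Lemma~\ref{plem}(1) then yields $\prod \S' \cdot \idn \subseteq p(\T_\S^n) \cdot \idn = \T_\S^n \cap \znr \subseteq \T_\S^n$.

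For the reverse inclusion, since $\Q$ is a quadratic module containing $\S$, it suffices to show that $\Q$ is itself a preordering, i.e., that $p(\Q)$ is closed under multiplication in $R$. By Lemma~\ref{plem}(2), $p(\Q) = M_G$ where $G = \S' \cup \{\vv^T(q\idn)\vv \mid q\in \prod \S',\vv\in R^n\} = \S' \cup \{q\cdot \vv^T\vv \mid q\in \prod \S',\vv\in R^n\}$. Taking $\vv=\e_1$ shows $\prod\S' \subseteq G$, so $M_{\prod\S'} \subseteq M_G$; conversely every element of $G$ is a sum-of-squares multiple of an element of $\prod\S'$, so $G \subseteq M_{\prod\S'}$, giving $p(\Q) = M_{\prod\S'}$. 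Finally, if $H\subseteq R$ is closed under multiplication, then $M_H$ is also: the product of two elements of $M_H$, written as sums of the form $\sum_i \sigma_i g_i$ with $\sigma_i\in\sum R^2$ and $g_i \in H \cup \{1\}$, expands to $\sum_{i,j}(\sigma_i\tau_j)(g_i h_j)$ with each $g_ih_j \in H\cup\{1\}$, and each $\sigma_i\tau_j$ a sum of squares. Applied to $H=\prod\S'$ this completes the proof.

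There is no substantial obstacle here; the only point requiring a bit of care is the identification $p(\Q) = M_{\prod\S'}$, where one must notice that the extra generators $q\cdot \vv^T\vv$ produced from $\prod\S'\cdot\idn$ collapse back into $M_{\prod\S'}$ because $\vv^T\vv$ is a sum of squares and equals $1$ when $\vv=\e_1$. Once this is in place, the closure of $M_H$ under multiplication for multiplicatively closed $H$ is the standard sum-of-squares bookkeeping, and Lemma~\ref{plem} translates everything back to matrices.
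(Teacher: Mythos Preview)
Your proof is correct and follows essentially the same route as the paper: both directions are reduced, via Lemma~\ref{plem}, to computing $p(\Q)=M_{\prod\S'}$ and observing that a quadratic module generated by a multiplicatively closed set is itself multiplicatively closed. The only cosmetic differences are that the paper writes out the identity $(\vv^T\s\vv)\idn=\sum_i(\vv\e_i^T)^T\s(\vv\e_i^T)$ explicitly for the first inclusion where you cite Lemma~\ref{plem}, and that you spell out the closure of $M_H$ under multiplication while the paper leaves it implicit.
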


\begin{proof}
The formula $(\vv^T \s \vv)\idn = \sum_{i=1}^n (\vv \e_i^T)^T \s (\vv \e_i^T)$,
which follows from $\idn=\sum_{i=1}^n \e_i \e_i^T$, implies that
$\S' \cdot \idn$ belongs to $\M_\S^n$, hence $\prod \S' \cdot \idn$ belongs to $\T_\S^n$. Therefore
$\M^n_{\S \cup (\prod \S' \cdot \idn)} \subseteq \T_\S^n$. It remains to show that the set
$p(\M^n_{\S \cup (\prod \S' \cdot \idn)}) \cdot \idn$
is closed under multiplication. Since $p(\M^n_{\S \cup (\prod \S' \cdot \idn)})= M_{\S''}$
where $\S''=\{\vv^T \t \vv \mid \t \in \S \cup (\prod \S' \cdot \idn), \vv \in R^n\} = \{\vv^T \t \vv \mid \t \in \S, \vv \in R^n\} \cup
\{\t \vv^T \vv \mid \t \in \prod \S' \cdot \idn, \vv \in R^n\} =\S' \cup \prod \S' =\prod \S'$ is closed under multiplication,
we have that the quadratic module $\M^n_{\S \cup (\prod \S' \cdot \idn)}$ is a preordering and hence,
by minimality, equal to $\T_\S^n$.
\end{proof}

For $\S=\emptyset$, the preordering $\T_\S^n=\Sigma_n(R)$ from Example \ref{sos} has the property 
$\T_\S^n (\T_\S^n \cap \znr) \subseteq \T_\S^n$. We will
show now that there exist preorderings which do not satisfy this property.

\begin{ex}
\label{ex2}
Let $R=\RR[x,y]$, $n=2$, $\s=\left[ \begin{array}{cc} x & 1 \\ 1 & y \end{array} \right]$ and $\S=\{\s\}$. 
We claim that $\T_\S^n (\T_\S^n \cap \znr) \not\subseteq \T_\S^n$.
\end{ex}

\begin{proof}
Clearly, $x \idn \in \T_\S^n \cap \znr$ and $\s \in \T_\S^n$. We claim that $x \s \not\in \T_\S^n$.
Suppose that this is false. Since $\T_\S^n=\M^n_{\S \cup (\prod \S' \cdot \idn)}$, there exist
elements $\a_i,\b_j,\c_{kl} \in \mtr$ and $t_k \in \prod \S'$ such that
$$x \s = \sum_i \a_i^T  \a_i+\sum_j \b_j^T \s \b_j+\sum_{k,l} t_k \c_{kl}^T \c_{kl}.$$
Every element $t_k$ is a product of the elements of the form 
$\vv^T \s \vv=p^2 x+q^2 y+2 p q$ where $p,q \in R$.
By comparing the degrees, we see that all $\a_i$ are constant or linear, $\b_i$ and $\c_{k,l}$ are constant and each
$t_i$ is a product of one or two factors 
with constant $p,q$.

Firstly, we compare entries at position $(2,2)$. We get that $(\a_i)_{21}=(\a_i)_{22}=0$ and 
$(\sum_{k,l} t_k \c_{kl}^T \c_{kl})_{22}=xy$. Secondly, we compare entries at position $(1,1)$
and we get that $(\sum_{k,l} t_k \c_{kl}^T \c_{kl})_{11}=0$, which implies that
$(\a_i)_{11}$ and $(\a_i)_{12}$ are constant multiples of $x$.
Finally, we compare entries at position $(1,2)$ and obtain a contradiction $x=0$.
\end{proof}

\begin{prop}
Let $N$ be a quadratic module in $R$ and let $\mathfrak N$ be the set of all quadratic modules 
on $\mnr$ whose intersection with $\znr$ is equal to $N \cdot \idn$. Then the smallest element of $\mathfrak N$ is
the set $$N^n := \{\sum_i n_i \a_i^T \a_i \mid n_i \in N, \a_i \in \mnr\}$$
and the largest element of $\mathfrak N$ is the set
$$\ind(N) := \{\a \in \snr \mid \vv^T \a \vv \in N \mbox{ for all } \vv \in R^n\}.$$
\end{prop}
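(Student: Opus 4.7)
My plan is to verify four things, in this order: (a) both $N^n$ and $\ind(N)$ are quadratic modules in $\mnr$; (b) both have intersection $N \cdot \idn$ with $\znr$, so they lie in $\mathfrak N$; (c) $N^n$ is contained in every element of $\mathfrak N$; (d) every element of $\mathfrak N$ is contained in $\ind(N)$.

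For (a), closure of $N^n$ under addition is immediate, $\idn = 1 \cdot \idn^T \idn \in N^n$ since $1 \in N$, and $\a^T\bigl(\sum_i n_i \b_i^T \b_i\bigr)\a = \sum_i n_i (\b_i \a)^T (\b_i \a) \in N^n$. For $\ind(N)$, the module axioms reduce to the observation that $\vv^T \idn \vv = \sum_i v_i^2 \in N$, that sums pass through, and that $\vv^T(\a^T \s \a)\vv = (\a \vv)^T \s (\a \vv) \in N$ whenever $\s \in \ind(N)$.

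For (b), by Lemma~\ref{plem}(1) it suffices to show $p(N^n) = N$ and $p(\ind(N)) = N$. The inclusions $N \subseteq p(N^n)$ and $N \subseteq p(\ind(N))$ follow by sending $n \in N$ to $n \idn$, noting $p(n\idn) = n$ and $n\idn \in N^n \cap \ind(N)$ (using $\vv^T(n\idn)\vv = n \sum v_i^2 \in N$). Conversely, $p\bigl(\sum_i n_i \a_i^T \a_i\bigr) = \sum_i n_i \vv_i^T \vv_i$ with $\vv_i = \a_i \e_1$, which lies in $N$ since $N$ is a quadratic module in $R$; and for $\s \in \ind(N)$, $p(\s) = \e_1^T \s \e_1 \in N$ by definition of $\ind(N)$.

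For (c), let $\M \in \mathfrak N$. For any $n \in N$ we have $n \idn \in \M \cap \znr$, so $n \a^T \a = \a^T (n \idn) \a \in \M$ for every $\a \in \mnr$; summing yields $N^n \subseteq \M$. For (d), the key computation is that for any $\s \in \snr$ and $\vv \in R^n$, the identity $\idn = \sum_{i=1}^n \e_i \e_i^T$ (already used in the previous lemma) gives
\[
(\vv^T \s \vv)\idn \;=\; \sum_{i=1}^n (\vv^T \s \vv)\,\e_i \e_i^T \;=\; \sum_{i=1}^n (\vv \e_i^T)^T \s (\vv \e_i^T).
\]
If $\s \in \M$, the right-hand side lies in $\M$, so $(\vv^T \s \vv)\idn \in \M \cap \znr = N \cdot \idn$, forcing $\vv^T \s \vv \in N$. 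Thus $\s \in \ind(N)$, and $\M \subseteq \ind(N)$.

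No step looks genuinely hard; the only one requiring a small trick is (d), where one must remember the congruence-to-scalar identity just displayed in order to pull a generic bilinear form $\vv^T \s \vv$ into the center and thereby back into $N$.
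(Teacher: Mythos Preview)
Your proof is correct and follows essentially the same approach as the paper: you verify the quadratic module axioms, use Lemma~\ref{plem}(1) to reduce membership in $\mathfrak N$ to computing $p(\cdot)$, and for the maximality of $\ind(N)$ you invoke precisely the same identity $(\vv^T \s \vv)\idn = \sum_i (\vv \e_i^T)^T \s (\vv \e_i^T)$ that the paper uses. The only difference is cosmetic---you spell out the module axioms in (a) more explicitly than the paper does.
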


\begin{proof}
Clearly, $N^n$ and $\ind(N)$ are quadratic modules which contain $N \cdot \idn$. It is also clear that
$p(\sum_i n_i \a_i^T \a_i)=\sum_i(n_i \sum_j (\a_i)_{j1}^2)$, hence
$p(N^n) \subseteq N$. By Lemma \ref{plem}, it follows that $N^n \cap \znr=p(N)\idn$.
Similarly, for every $\a \in \ind(N)$, $p(\a)=\e_1^T \a \e_1 \in N$,
hence $p(\ind(N)) \subseteq N$ and so $\ind(N) \cap \znr=N \cdot \idn$.
Therefore $N^n$ and $\ind(N)$ belong to $\mathfrak N$. Now pick any $\M$
from $\mathfrak N$. Since $\M$ contains $N \cdot \idn$, it also contains the
smallest quadratic module in $\mnr$ generated by $N \cdot \idn$, namely $N^n$.
On the other hand, for every $\m \in \M$ and every $\vv \in R^n$,
$(\vv^T \m \vv)\idn = \sum_{i=1}^n (\vv \e_i^T)^T \m (\vv \e_i^T)$
belongs to $\M \cap \znr=N \cdot \idn$, hence $\vv^T \m \vv \in N$. It follows
that $\M \subseteq \ind(N)$.
\end{proof}

If $N$ is a preordering in $R$ (i.e. a quadratic module in $R$ which is closed under multiplication), 
then every element of $\mathfrak N$ (including $N^n$ and $\ind(N)$)
is a preordering in $\mnr$.


\section{Prime quadratic modules}
\label{sec3}

A quadratic module $\M$ in $\mnr$ is \textit{proper} if $-\idn \not\in \M$. 
A proper quadratic module $\M$ in $\mnr$ is \textit{prime} if for every $\a \in \snr$ 
and $r \in R$ such that $\a r^2 \in \M$ we have that either $\a \in \M$ or $r \in p(\M \cap -\M)$.
(Equivalently, for every $\a \in \snr$ and $\Z \in \znr$ such that $\a \Z^2 \in \M$
we have that either $\a \in \M$ or $\Z \in \M \cap -\M$.)
The set $\supp \M:=\M \cap -\M$ is called the \textit{support} of the quadratic module $\M$.

\begin{lem}
\label{idlem}
For every prime quadratic module $\M$ the following are true:
\begin{enumerate}
\item If $\m_1+\m_2 \in \supp \M$ for some $\m_1,\m_2 \in \M$ then $\m_1,\m_2 \in \supp \M$.
\item If $2\a \in \M$ for some $\a \in \snr$ then $\a \in \M$.
\item $p(\supp \M)\cdot \snr \subseteq \supp \M$ and $R \cdot \supp \M \subseteq \supp \M$.
\item If $\a b \in \supp \M$ for some $\a \in \snr$ and $b \in R$, then either $\a \in \supp \M$ or $b  \in p(\supp \M)$.
\item The set  $p(\supp \M)$ is a prime ideal in $R$.
\item An element $\a=[a_{ij}] \in \snr$ belongs to $\supp \M$ iff all $a_{ij}$ belong to $p(\supp \M)$.
In other words, $\supp \M = \sn(p(\supp \M))$.
\item The set $\{\b \in \mnr \mid \b^T \b \in \supp \M\}$ is a two-sided ideal in $\mnr$
and its intersection with $\snr$ is equal to $\supp \M$.
\item Let $\S$ be a subset of $\mnr$ and let $\operatorname{ideal}(\S)$ be the two-sided ideal in $\mnr$ generated by $\S$.
If $\s^T \s \in \supp \M$ for every $\s \in \S$, then $\operatorname{ideal}(\S) \cap \snr \subseteq \supp \M$.
\end{enumerate}
\end{lem}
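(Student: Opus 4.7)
I plan to prove the eight claims in order, writing $\mathfrak{p} := p(\supp \M)$ throughout.

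Claim (1) follows directly from additive closure of $\M$: if $\m_1+\m_2 \in \supp \M$ with $\m_i \in \M$, then $-\m_1 = \m_2 + (-(\m_1+\m_2)) \in \M + \M \subseteq \M$, so $\m_1 \in \supp \M$, and symmetrically $\m_2 \in \supp \M$. For (2), I first record that $r \in \mathfrak{p}$ forces $r\idn \in \supp \M$: picking $\m \in \supp \M$ with $p(\m) = r$, the congruences $\E_{1j}^T \m \E_{1j} = r\E_{jj}$ lie in $\supp \M$ for each $j$ and sum to $r\idn$. Then $4\a = (2\a)+(2\a) \in \M$, and primality applied to $\a \cdot 2^2$ yields $\a \in \M$ or $2 \in \mathfrak{p}$; in the latter case $\idn + \idn \in \supp \M$, whence (1) forces $\idn \in \supp \M$, contradicting $-\idn \notin \M$.

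For (3), the identity
$$2r\a = (\idn+\a)^T(r\idn)(\idn+\a) + \idn^T(-r\idn)\idn + \a^T(-r\idn)\a$$
places $2r\a$ in $\M$ whenever $\pm r\idn \in \M$ (i.e.\ for $r \in \mathfrak{p}$), and (2) upgrades this to $r\a \in \M$; running the argument with $-r$ yields $r\a \in \supp \M$. The second half is analogous, via $4r\m = ((1+r)\idn)^T \m ((1+r)\idn) - ((1-r)\idn)^T \m ((1-r)\idn)$ using $\pm\m \in \M$ and applying (2) twice. Then (4) follows: $\a b \in \supp \M$ forces $\a b^2 = b(\a b) \in \supp \M$ by (3), and primality on $\a b^2 \in \M$ yields $\a \in \M$ or $b \in \mathfrak{p}$, with the parallel argument on $-\a b$ promoting the first alternative to $\a \in \supp \M$. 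Claim (5) is then routine from (3) for additive and scalar closure, and from (4) applied to $\a = r\idn$, $b = s$ for primality.

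For (6), the reverse direction is a short congruence computation: for $a \in \mathfrak{p}$, $a\idn \in \supp \M$ conjugated by $\E_{ii}$ and $\E_{ii}+\E_{ij}$ places $a\E_{ii}$ and $a(\E_{ij}+\E_{ji})$ in $\supp \M$, so any symmetric $\a$ with entries in $\mathfrak{p}$ lies in $\supp \M$. For the forward direction, $\E_{1i}\a\E_{i1} = a_{ii}\E_{11} \in \supp \M$ yields $a_{ii} \in \mathfrak{p}$; and $(\E_{1i}+\E_{1j})\a(\E_{i1}+\E_{j1}) = (a_{ii}+a_{jj}+2a_{ij})\E_{11} \in \supp \M$ after subtraction of the diagonal terms leaves $2a_{ij}\E_{11} \in \supp \M$, and since $2 \notin \mathfrak{p}$ and $\mathfrak{p}$ is prime by (5), $a_{ij} \in \mathfrak{p}$.

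The main obstacle will be (7), whose crux is a \emph{formal reality} property for $\mathfrak{p}$: $\sum_i y_i^2 \in \mathfrak{p}$ implies every $y_i \in \mathfrak{p}$. To prove this, $\sum_i y_i^2 \idn \in \supp \M$ by (6), so $-\sum_i y_i^2 \idn \in \M$; adding $\sum_{j \neq i} y_j^2 \idn \in \M$ (each summand being the square $(y_j\idn)^T(y_j\idn)$) yields $-y_i^2 \idn \in \M$, and primality applied to $(-\idn) \cdot y_i^2$ together with properness forces $y_i \in \mathfrak{p}$. With this in hand, for any $\b \in \mnr$, $\b^T\b \in \supp \M$ iff (by (6)) every entry of $\b^T\b$ lies in $\mathfrak{p}$ iff (by formal reality applied to the diagonal entries $\sum_k \b_{ki}^2$) every entry of $\b$ lies in $\mathfrak{p}$, i.e., iff $\b \in \mn(\mathfrak{p})$. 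Hence the set in (7) equals the two-sided ideal $\mn(\mathfrak{p})$ of $\mnr$, and its intersection with $\snr$ is $\sn(\mathfrak{p}) = \supp \M$ by (6). Finally, (8) is immediate: $\S$ lies in this ideal by hypothesis, so does $\operatorname{ideal}(\S)$, and intersecting with $\snr$ recovers $\supp \M$.
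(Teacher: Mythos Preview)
Your proof is correct and follows essentially the same route as the paper's, with only cosmetic differences (you phrase the key step in (7) as ``formal reality'' of $\mathfrak{p}$, whereas the paper invokes (1) and (4) directly). One small slip to fix in the reverse direction of (6): conjugating $a\idn$ by $\E_{ii}+\E_{ij}$ yields $a(\E_{ii}+\E_{ij}+\E_{ji}+\E_{jj})$, not $a(\E_{ij}+\E_{ji})$ --- either subtract the diagonal pieces $a\E_{ii},a\E_{jj}\in\supp\M$, or simply invoke the first half of (3) to get $a\cdot\snr\subseteq\supp\M$ directly.
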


\begin{proof}
Let $\M$ be a prime quadratic module. 

If $\m_1,\m_2 \in \M$ and $\m_1+\m_2 \in \supp \M$, then $-\m_1=\m_2+(-\m_1-\m_2) \in \M+\M \subseteq \M$.
This proves (1).

If $2\a \in \M$ for some $\a \in \snr$, then also $4\a \in \supp \M$ since $\M$ is closed under addition.
Since $\M$ is prime, $4\a \in \supp \M$ implies that either $\a \in \supp \M$ or $2 \in p(\supp \M)$.
However, $2 \in p(\supp \M)$ implies that $-1=1+(-2) \in p(\M)+p(\M) \subseteq p(\M)$, a contradiction
with the assumption that $\M$ is proper.

To prove the first part of (3), take any $m \in p(\supp \M)$ and $\a \in \snr$ and note
that the elements $(\idn+\a)^T (m\,\idn) (\idn+\a)$
and $(\idn-\a)^T  (m\,\idn)(\idn-\a)$ belong to $\supp \M$.
Therefore, their sum ($=4m\a$) also belongs to $\supp \M$.
By (2), $m\a \in \supp \M$.
To prove the second part of (3), take any $\m \in \supp \M$ and $r \in R$. Since the elements
$(1+r)^2 \m$ and  $(1-r)^2 \m$ belong to $\supp \M$, their difference ($=4r \m$) also belongs to $\supp \M$.
By (2), $r\m \in \supp \M$.

To prove (4), pick $\a \in \snr$ and $b \in R$ such that $\a b \in \supp \M$. By (3), it follows that
$\a b^2 \in \supp \M$. Since $\M$ is prime, it follows that either $\a \in \supp \M$ or $b \in p(\supp \M)$ as claimed.

Clearly, (5) follows from (3) and (4).

If $\a=[a_{ij}]$ belongs to $\supp \M$, then $a_{ii}=\e_i^T \a \e_i \in p(\supp \M)$ for all $i$
and $a_{ii}+a_{jj}+2a_{ij} =(\e_i+\e_j)^T \a (\e_i+\e_j) \in p(\supp \M)$ for all $i$ and $j$.
Hence $a_{ij} \in p(\supp \M)$ for all $i$ and $j$ by (2) and Lemma \ref{plem}.
Conversely, if $a_{ij} \in p(\supp \M)$ for all $i$ and $j$, then $a_{ii}\E_{ii}$
and $a_{ij}(\E_{ij}+\E_{ji}) \in \supp \M$ by (3). Since $\M$ is closed under addition, 
it follows that $\a \in \supp \M$. This proves (6).

Suppose that $\b^T \b \in \supp \M$ for some $\b=[b_{ij}] \in \mnr$. By (6), it follows that
$b_{1i}^2+\ldots+b_{ni}^2 \in p(\supp \M)$ for every $i=1,\ldots,n$. By (1) and (4), it follows that
$b_{ij} \in p(\supp \M)$ for all $i,j=1,\ldots,n$. Hence, $\b \in \mn(p(\supp \M))$. Conversely, if
$\b \in \mn(p(\supp \M))$, then $\b^T \b \in \sn(p(\supp \M))$ and so $\b^T \b \in \supp \M$
by (5). It follows that $\{\b \in \mnr \mid \b^T \b \in \supp \M\} = \mn(p(\supp \M))$ which
implies both claims of (7).

Write $\J=\{\b \in \mnr \mid \b^T \b \in \supp \M\}$. 
If $\s^T \s \in \supp \M$ for every $\s \in \S$, then $\S \subseteq \J$.
By (7), it follows that $\operatorname{ideal}(\S) \subseteq \J$ and so
$\operatorname{ideal}(\S) \cap \snr \subseteq \J \cap \snr=\supp \M$.
\end{proof}

For every subset $\S$ of $\snr$ write $\K_\S^n$ for the set of all prime quadratic modules 
in $\mnr$ which contain $\S$.  Proposition \ref{prop:ks} is the main result of this section.
The idea for its proof comes from \cite{sch2}.

\begin{prop}
\label{prop:ks}
For every subset $\S \subseteq \snr$ there exists a subset $\tilde{\S} \subseteq \M_\S^n \cap \znr$ such that 
$\K_\S^n=\K_{\tilde{\S}}^n$. If $\S$ is finite, then $\tilde{\S}$ can also be chosen finite.
\end{prop}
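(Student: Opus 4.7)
The containment $\K^n_\S\subseteq\K^n_{\tilde\S}$ is automatic once $\tilde\S\subseteq\M^n_\S$, since any prime quadratic module containing $\S$ must contain $\M^n_\S$. The substance of the proposition is therefore the reverse inclusion: I must design $\tilde\S$ inside $\znr$ so that every prime $\M$ containing $\tilde\S$ is forced to contain each $\s\in\S$.

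My plan is to build $\tilde\S$ one matrix at a time. For $\s\in\S$ and any $\vv\in R^n$, Lemma \ref{plem}(2) puts $(\vv^T\s\vv)\idn$ into $\M^n_\S\cap\znr$, making every scalar of this shape a legal certificate. Choosing $\vv=\e_i$ records the diagonal entry $\s_{ii}$; choosing $\vv=\e_i\pm\e_j$ records the symmetric combinations $\s_{ii}+\s_{jj}\pm 2\s_{ij}$; and most usefully, taking $\vv=\mathrm{adj}(\s)\e_i$ yields the identity
\[
\vv^T\s\vv \;=\; \e_i^T\,\mathrm{adj}(\s)^T\,\s\,\mathrm{adj}(\s)\,\e_i \;=\; \det(\s)\cdot\mathrm{adj}(\s)_{ii},
\]
which is $\det(\s)$ times the $(n-1)\times(n-1)$ principal minor obtained by deleting row and column $i$. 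Iterating this adjugate construction on the induced lower-order submatrices produces central elements equal to products of principal minors of all orders -- the Sylvester-style data that detects $\s\succeq 0$. Collecting these together with pivot variants at each diagonal position into a finite $\tilde\S_\s\subseteq\M^n_\S\cap\znr$, and taking $\tilde\S:=\bigcup_{\s\in\S}\tilde\S_\s$, gives a set that is finite whenever $\S$ is.

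For the reverse inclusion, fix a prime $\M\supseteq\tilde\S$ and $\s\in\S$; I argue $\s\in\M$ by induction on $n$. If every diagonal entry $a_{ii}$ of $\s$ lies in $p(\supp\M)$, then the scalars $(\e_i\pm\e_j)^T\s(\e_i\pm\e_j)\in p(\M)$ combined with Lemma \ref{idlem}(2) and (6) place each off-diagonal $\s_{ij}$ in $p(\supp\M)$, so $\s\in\supp\M\subseteq\M$. Otherwise choose $a_{kk}\notin p(\supp\M)$, permute it into the $(1,1)$ slot, partition $\s=\begin{pmatrix}a_{11} & \bb^T \\ \bb & \s'\end{pmatrix}$, and use the Schur-complement congruence $\s=\mathbf{L}^T\,\diag(a_{11},\s'')\,\mathbf{L}$ with $\mathbf{L}=\begin{pmatrix}1 & \bb^T/a_{11} \\ 0 & \idn\end{pmatrix}$ and $\s''=\s'-\bb\bb^T/a_{11}$. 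Clearing denominators yields a congruence whose inner factor is block-diagonal in $\sn(R)$, with upper block a square in $\Sigma_n(R)$ and lower block a multiple of the Schur complement $a_{11}\s'-\bb\bb^T\in\mathcal{S}_{n-1}(R)$. The inductive hypothesis, applied to the prime quadratic module induced by $\M$ on $\mathcal{M}_{n-1}(R)$ together with the minor-type certificates for $a_{11}\s'-\bb\bb^T$ already sitting in $\tilde\S_\s$, places that Schur complement inside the induced module, and hence its embedding back into $\M$. Conjugation by the cleared triangular factor then lands $a_{11}^{\,2}\,\s$ (or a suitable power) inside $\M$, so applying the primality axiom to $\s\cdot a_{11}^{\,2}\in\M$ with $a_{11}\notin p(\supp\M)$ extracts $\s\in\M$.

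The technical core of the argument is exactly this matching: the Schur complement comes decorated with extra factors of $a_{11}$ that must be accommodated, and making sure the relevant $r^2$ patterns for Lemma \ref{idlem} are always available is what forces the iterated adjugate construction -- the products of principal minors of every order -- into $\tilde\S_\s$, rather than just the principal minors themselves. Tracking which powers of $a_{11}$ appear at each inductive step and aligning them with squared multipliers is, I expect, the real labor of the proof.
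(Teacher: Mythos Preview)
Your Schur-complement-plus-induction strategy is exactly the one the paper uses, but there is a genuine gap in the place you yourself flag as ``the real labor.'' You propose to apply the inductive hypothesis to the Schur complement $\s_1:=a_{11}\s'-\bb\bb^T$ and to include its central certificates $\tilde\S_{\s_1}$ inside $\tilde\S_\s$. The trouble is that those certificates live in $p(\M^{n-1}_{\s_1})=M_{\{\vv^T\s_1\vv\}}$, and the identity you need, $\mathbf w^T\s\mathbf w=a_{11}\,\vv^T\s_1\vv$, only gives $a_{11}\cdot M_{\{\vv^T\s_1\vv\}}\subseteq M_{\{\mathbf w^T\s\mathbf w\}}$. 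So you cannot place $\tilde\S_{\s_1}$ itself inside $\M^n_\S\cap\znr$; only $a_{11}\tilde\S_{\s_1}$ fits. But from $a_{11}g\in p(\M)$ and $a_{11}\notin p(\supp\M)$ you \emph{cannot} cancel to $g\in p(\M)$ in a mere prime quadratic module (that would need a preordering), so the induction does not close. The fix is to apply the inductive hypothesis not to $\s_1$ but to $\b:=a_{11}\s_1$, whose generators $\vv^T\b\vv=a_{11}\vv^T\s_1\vv$ already lie in $\{\mathbf w^T\s\mathbf w\}$; this is precisely the paper's choice $\b_{ij}=\tilde a_{ij}(\tilde a_{ij}\c_{ij}-\bb_{ij}^T\bb_{ij})$. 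Once you make this change, the congruence $\X_+^T\diag(a_{11}^3,\b)\X_+=a_{11}^4\a$ gives $a_{11}^4\a\in\M$ and primality finishes as you intended.

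Two further remarks. First, your description of $\tilde\S_\s$ as ``products of principal minors via iterated adjugates'' does not match what the recursion actually produces: after one step the smaller matrix is $a_{11}$ times the Schur complement, so its entries are already nonlinear in the $a_{ij}$ and the iterated certificates are not simply minor products. That adjugate paragraph is a red herring and should be dropped. Second, the paper replaces your case split on which $a_{kk}\notin p(\supp\M)$ by a single uniform step: it runs the congruence at \emph{every} pivot (including the off-diagonal $\tilde a_{ij}=a_{ii}+a_{jj}+2a_{ij}$), sums, and obtains an explicit identity $\a\,t\in\M^n_{\S_\a}$ with $t=\sum_{i,j}\tilde a_{ij}^4\in\sum R^2$. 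Primality then gives either $\a\in\M$ or $t\in p(\supp\M)$, and the latter forces all $a_{ij}\in p(\supp\M)$ at once. This is cleaner than your case analysis and worth adopting.
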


\begin{proof}
In the following we assume that for every $k=2,\ldots,n$, 
$\skmr$ is embedded in the upper left corner of $\skr$ by adding zeros elsewhere.

It suffices to show that for every $k=n,\ldots,2$ and every element $\a$ of $\skr$, there exists a finite subset
$\S_\a$ of $\skmr$ such that $\S_\a \subseteq \M_\a$ and $\K_\a^n=\K_{\S_\a}^n$. 
It follows that for every subset $\S' \subseteq \skr$
the set $\S'':=\bigcup_{\a \in \S'} \S_\a$ is contained both in $\skmr$ and $\M_{\S'}$,
it satisfies $\K_{\S''}^n=\bigcap_{\a \in \S'} \K_{\S_\a}^n=\bigcap_{\a \in \S'} \K_\a^n=\K_{\S'}^n$
and it is finite if $\S'$ is finite. The result follows by induction.

Pick $\a=[a_{ij}] \in \snr$. For every $i,j=1,\ldots,n$ write
$$\tilde{a}_{ij}:=\left\{ \begin{array}{cc} a_{ii} & \text{ if } j=i \\ a_{ii}+a_{jj}+2a_{ij} & \text{ if } j \ne i \end{array} \right.$$
and 
$$\t_{ij}:=\left\{ \begin{array}{cc} \idn & \text{ if } j=i \\ 
\idn+\E_{ji} & \text{ if } j \ne i \end{array} \right.$$
Let $\p_{1i}$ be the permutation matrix that belongs to the transposition $(1i)$. Note that
$$\a_{ij}:=\p_{1i}^T \t_{ij}^T \a \t_{ij} \p_{1i}=
\left[ \begin{array}{cc} \tilde{a}_{ij} & \bb_{ij} \\ \bb_{ij}^T & \c_{ij} \end{array} \right]$$
for some $\bb_{ij}$ and $\c_{ij}$. Now write $\b_{ij}:=\tilde{a}_{ij} (\tilde{a}_{ij}\c_{ij}-\bb_{ij}^T \bb_{ij})$ and 
$$\X_{\pm,ij} := \left[ \begin{array}{cc} \tilde{a}_{ij} & \pm  \bb_{ij} \\  0 & \tilde{a}_{ij} \idnm \end{array} \right]$$ and 
observe that
\begin{equation}
\label{eq1}
\X_{-,ij}^T \a_{ij} \X_{-,ij} = \left[ \begin{array}{cc} \tilde{a}_{ij}^3 & 0 \\  0 & \b_{ij} \end{array} \right]
\end{equation}
and
\begin{equation}
\label{eq2}
\X_{+,ij}^T \left[ \begin{array}{cc} \tilde{a}_{ij}^3 & 0 \\ 0 & \b_{ij} \end{array} \right]  \X_{+,ij} =\tilde{a}_{ij}^4 \a_{ij}.
\end{equation}
If $\a \in \skr$, then the set $\S_\a := \bigcup_{i, j=1}^n\{\tilde{a}_{ij}^3, \b_{ij}\}$ 
is clearly contained in $\skmr$. By the equality (1),
it is also contained in $\M_\a^n$. 

To prove that $\K_\a^n=\K_{\S_\a}^n$, it suffices to find an element $t \in \sum R^2$ such that
$\a t \in \M_{\S_\a}^n$ and $\K_{\a t}^n \subseteq \K_\a^n$. These two properties imply that
$\K_{\S_\a}^n \subseteq \K_{\a t}^n \subseteq \K_\a^n$ and the property $\S_\a \subseteq \M_\a^n$
implies that $\K_\a^n \subseteq \K_{\S_\a}^n$.
Write
$$t := \sum_{i,j=1}^n \tilde{a}_{ij}^4.$$
and note that, by the equality (2) and the definition of $\a_{ij}$,
\begin{equation}
\label{eq3}
\a t=\sum_{i,j=1}^n \p_{1i}^T (\t_{ij}^{-1})^T \X_{+,ij}^T 
\left[ \begin{array}{cc} \tilde{a}_{ij}^3 & 0 \\ 0 & \b_{ij} \end{array} \right]  
\X_{+,ij} \t_{ij}^{-1} \p_{1i}\in \M_{\S_\a}^n.
\end{equation}

Suppose now that $\a t \in \M$ for some prime quadratic module $\M$. Since $t \in \sum R^2$, it follows that $\a t^2 \in \M$,
hence either $\a \in \M$ or $t \in p(\supp \M)$. If $t \in p(\supp \M)$, then by claim (1) of Lemma \ref{idlem}, 
$\tilde{a}_{ij}^4 \in p(\supp \M)$ for all $i,j$. By claim (5) of Lemma \ref{idlem},
it follows that $\tilde{a}_{ij} \in p(\supp \M)$ for all $i,j$. Therefore, $a_{ij} \in p(\supp \M)$
by claim (2) of Lemma \ref{idlem}. By claim (6) of Lemma \ref{idlem}, it follows that $\a \in \supp \M$.
\end{proof}

Proposition \ref{prime1} will be used in the proof of Proposition \ref{morita}.

\begin{prop}
\label{prime1}
If $N$ is a prime quadratic module in $R$, then $\ind(N)$ is a prime quadratic module in $\mnr$.
Moreover,  $\ind(N)$ is the only prime quadratic module on $\mnr$ 
whose intersection with $\znr$ is equal to $N \cdot \idn$. 
\end{prop}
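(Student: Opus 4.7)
The plan is to prove primeness and uniqueness separately, with the bulk of the uniqueness argument resting on Proposition \ref{prop:ks}.

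For primeness, I first verify that $\ind(N)$ is proper: by the previous proposition $\ind(N) \cap \znr = N \cdot \idn$, so $-\idn \in \ind(N)$ would force $-1 \in N$, contradicting the properness of $N$ (which is automatic since $N$ is prime). The key computation is to identify $p(\supp \ind(N))$. Unravelling definitions gives
$$\supp \ind(N) = \ind(N) \cap (-\ind(N)) = \{\a \in \snr \mid \vv^T \a \vv \in \supp N \text{ for all } \vv \in R^n\} = \ind(\supp N),$$
and applying the previous proposition to the quadratic module $\supp N$ in $R$ yields $\ind(\supp N) \cap \znr = (\supp N)\cdot \idn$; combined with Lemma \ref{plem}(1) this gives $p(\supp \ind(N)) = \supp N$. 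The prime condition now follows directly: if $\a r^2 \in \ind(N)$, then $(\vv^T \a \vv) r^2 \in N$ for every $\vv \in R^n$, and primeness of $N$ forces either $r \in \supp N = p(\supp \ind(N))$ or $\vv^T \a \vv \in N$ for every $\vv$, i.e.\ $\a \in \ind(N)$.

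For uniqueness, let $\M$ be any prime quadratic module on $\mnr$ with $\M \cap \znr = N \cdot \idn$. The previous proposition already gives $\M \subseteq \ind(N)$, so only the reverse inclusion is at issue. Fix $\a \in \ind(N)$ and apply Proposition \ref{prop:ks} to the singleton $\{\a\}$: this produces a finite subset $\tilde{\S}_\a \subseteq \M^n_{\{\a\}} \cap \znr$ with $\K^n_{\{\a\}} = \K^n_{\tilde{\S}_\a}$, so that a prime quadratic module contains $\a$ iff it contains $\tilde{\S}_\a$. Since $\ind(N)$ is prime (just proved) and contains $\a$, it lies in $\K^n_{\tilde{\S}_\a}$, so $\tilde{\S}_\a \subseteq \ind(N) \cap \znr = N \cdot \idn$. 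But $N \cdot \idn = \M \cap \znr \subseteq \M$, hence $\tilde{\S}_\a \subseteq \M$, and feeding this back into $\K^n_{\{\a\}} = \K^n_{\tilde{\S}_\a}$ for the prime module $\M$ yields $\a \in \M$. Thus $\ind(N) \subseteq \M$, completing the proof.

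The main obstacle is really conceptual rather than technical: one must recognize that Proposition \ref{prop:ks} reduces membership of a symmetric matrix $\a$ in any prime quadratic module to a \emph{central} condition on the finite set $\tilde{\S}_\a$, a condition that depends only on $\a$, not on the ambient prime module. Once this is seen, two prime quadratic modules sharing the same intersection with $\znr$ are forced to coincide. The only nontrivial piece of bookkeeping is the identification $p(\supp \ind(N)) = \supp N$ in the primeness step, and even that is essentially a reapplication of the previous proposition to the quadratic module $\supp N$ in place of $N$.
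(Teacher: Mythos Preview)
Your proof is correct and follows the same strategy as the paper's: primeness of $\ind(N)$ is checked directly from the defining property, and uniqueness is deduced from Proposition \ref{prop:ks} by observing that membership of any $\a$ in a prime quadratic module is governed entirely by the central set $\tilde{\S}_\a \subseteq \znr$. One small slip: $\supp N$ is not a quadratic module (it does not contain $1$), so neither the previous proposition nor Lemma \ref{plem}(1) applies to it verbatim; however, for the primeness verification you only need the inclusion $\supp N \subseteq p(\supp \ind(N))$, and this is immediate since $r \in \supp N$ gives $r\idn \in (\ind N) \cap (-\ind N)$.
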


\begin{proof}
Suppose that $\a t^2 \in \ind(N)$ for some $\a \in \snr$ and $t\in R$.
It follows that $(\vv^T \a \vv)t^2=\vv^T(\a t^2)\vv \in N$ for every $\vv \in R^n$ and $t \in R$.
If $N$ is prime, then either $\vv^T \a \vv \in N$ or $t \in \supp N$.
If $t \not\in \supp N$, it follows that $\a \in \ind(N)$. If $-\idn \in \ind(N)$, then
$-1=\e_1^T (-\idn)\e_1 \in N$, a contradiction. Therefore, $\ind(N)$ is prime.

The uniqueness part follows from Proposition \ref{prop:ks}.
Namely, for every $\a \in \snr$, there exists a subset
$\S_\a$ of $\znr \cap \M_\a$ such that $\K_\a^n=\K_{\S_\a}^n$. 
For every prime quadratic module $\M$ in $\mnr$, it follows
$\a \in \M$ iff $\M \in \K_\a^n$ iff $\M \in \K_{\S_\a}^n$ iff $\S_\a \subseteq \M$.
Hence $\M=\{\a \in \snr \mid \S_\a \subseteq \M \cap \znr\}$, i.e. $\M$ is uniquely determined by $\M \cap \znr$.
\end{proof}

In particular $N \mapsto \ind(N)$ and $\M \mapsto p(\M)$ give a one-to-one correspondence
between prime quadratic module on $R$ and prime quadratic modules on $\mnr$.

\begin{rem}
Proposition \ref{prime1} can also be proved in a more conceptual way by observing that
there is a natural 1-1 correspondence between prime quadratic modules in $R$ with support $J$ and
quadratic modules in the field $QF(R/J)$ and a natural 1-1 correspondence between prime quadratic
modules in $\mnr$ with support $\sn(J)$ and quadratic modules in $\mn(QF(R/J))$. 
Theorem 1 from \cite{c2} gives a natural 1-1 correspondence between quadratic
modules in $QF(R/J)$ and quadratic modules in $\mn(QF(R/J))$.
\end{rem}

We continue with an alternative description of $\ind(N)$.

\begin{prop}
\label{prime2}
For every prime quadratic module $N$ on $R$, we have that
$$\ind(N)=\{\a \in \snr \mid \exists t \in R \setminus \supp N \colon \a t^2 \in N^n\}.$$
\end{prop}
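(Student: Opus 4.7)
The plan is to denote the right-hand side by $L$ and deduce $L = \ind(N)$ from the uniqueness clause of Proposition \ref{prime1}: the goal is to show that $L$ is a prime quadratic module in $\mnr$ whose intersection with $\znr$ equals $N \cdot \idn$, which by Proposition \ref{prime1} forces $L$ to coincide with $\ind(N)$.

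The inclusion $L \subseteq \ind(N)$ is a quick primality argument. Since $N^n \subseteq \ind(N)$, any $\a$ with $\a t^2 \in N^n$ satisfies $(\vv^T \a \vv)\,t^2 = \vv^T(\a t^2)\vv \in N$ for every $\vv \in R^n$. Because $N$ is prime and $t \notin \supp N$, this forces $\vv^T \a \vv \in N$, i.e.\ $\a \in \ind(N)$.

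For the reverse inclusion I verify the requisite structural properties of $L$ in turn. Closure under $\b \mapsto \b^T \cdot \b$ is immediate from $\b^T(n\, \c^T \c)\b = n\,(\c\b)^T(\c\b)$. For $L + L \subseteq L$, if $\a t^2, \a' t'^2 \in N^n$ with $t,t' \notin \supp N$, then $(\a+\a')(tt')^2 \in N^n$ because $N$ absorbs squares from $R$, and $tt' \notin \supp N$ because $\supp N$ is a prime ideal of $R$ (the $n=1$ instance of Lemma \ref{idlem}(5)). Taking $t=1$ puts $\idn$ in $L$. Properness follows because $-\idn \in L$ would yield $t^2$ and $-t^2$ both in $N$, hence $t^2 \in \supp N$ and then $t \in \supp N$ by primality of that ideal, a contradiction. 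Finally, $L \cap \znr = N \cdot \idn$ uses primality of $N$ itself: $r\idn \in L$ gives $rt^2 \in N$ with $t \notin \supp N$, forcing $r \in N$; the reverse inclusion is trivial with $t=1$.

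The genuine obstacle is primality of $L$. Given $\a r^2 \in L$, I obtain $\a(rt)^2 \in N^n$ for some $t \notin \supp N$; either $rt \notin \supp N$, in which case $\a \in L$ directly, or $rt \in \supp N$, in which case primality of the ideal $\supp N$ forces $r \in \supp N$. To land in $p(\supp L)$ as the matrix definition of primality demands, I reduce to showing $\supp N \subseteq p(\supp L)$: for $r \in \supp N$ both $\pm r \idn = (\pm r)\,\idn^T \idn$ lie in $N^n \subseteq L$, so $r\idn \in L \cap (-L) = \supp L$ and $r = p(r\idn) \in p(\supp L)$. Once primality is in place, Proposition \ref{prime1} closes the proof.
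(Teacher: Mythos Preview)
Your proof is correct and follows essentially the same approach as the paper: you verify that the right-hand side is a prime quadratic module in $\mnr$ with intersection $N\cdot\idn$ in $\znr$, then invoke the uniqueness clause of Proposition \ref{prime1}. The only difference is that you additionally prove the inclusion $L\subseteq\ind(N)$ directly at the start; this is harmless but redundant, since the uniqueness argument already yields equality.
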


\begin{proof}
Denote the right-hand side by $\widehat{N^n}$.
Clearly, $\idn \in \widehat{N^n}$. If $\a,\b \in \widehat{N^n}$ then there exist
$s,t \in R \setminus \supp N$ such that $\a s^2 \in N^n$ and $\b t^2 \in N^n$. 
Hence $(\a+\b)s^2 t^2 =(\a s^2)t^2+(\b t^2)s^2 \in N^n$ and $st \in R \setminus \supp N$. 
If $\d \in \widehat{N^n}$ and $\c \in \mnr$,
then $\d u^2 \in N^n$ for some $u \in R \setminus \supp N$, hence $(\c^T \d \c)u^2=\c^T(\d u^2)\c \in N^n$.
Therefore, $\widehat{N^n}$ is a quadratic module. 

Clearly, $N \subseteq p(\widehat{N^n})$. To prove the opposite inclusion,
take any $x \cdot \idn \in \widehat{N^n} \cap \znr$ and pick $s \in R \setminus \supp N$ 
such that $xs^2 \cdot \idn \in N^n \cap \znr=N \cdot \idn$.
Since $N$ is prime, it follows that $x \in N$.

To show that $\widehat{N^n}$ is prime, pick any $\a \in \snr$ and 
$r \in R$ such that $\a r^2 \in \widehat{N^n}$. 
Pick $t \in R \setminus \supp N$ such that $\a r^2t^2 \in N^n$.
If $r \in R \setminus \supp N$ then also $rt \in R \setminus \supp N$
and so $\a \in \widehat{N^n}$. If $r \in \supp N$ then also $r \in p(\supp N)$.

By the uniqueness part of Lemma \ref{prime1}, it follows that $\widehat{N^n}=\ind{N}$.
\end{proof}

\section{Prime quadratic modules as a topological space}
\label{sec4}

For every prime quadratic module $\M$ in $\mnr$, we write
\begin{eqnarray*}
\M^+  = \{\a \in \snr & \mid & \vv^T \a \vv \in p(\M) \setminus \supp p(\M) \\
& & \mbox{ for all } \vv \in R^n \setminus (\supp p(\M))^n \}.
\end{eqnarray*}
In particular, for every prime quadratic module $N$ in $\mor=R$, we have that $N^+=N \setminus \supp N$ and
\begin{eqnarray*}
(\ind N)^+ & = & \{\a \in \snr \mid \vv^T \a \vv \in N^+  \mbox{ for all } \vv \in R^n \setminus (\supp N)^n \}.
\end{eqnarray*}
For every subset $\S$ of $\snr$ write $\U_\S^n$ for the set of all prime quadratic modules $\M$ on $\mnr$
such that $\S \subseteq \M^+$.

\begin{lem}
\label{prop:us}
For every prime quadratic module $\M$, we have that:
\begin{enumerate}
\item $p(\M^+)=p(\M)^+$,
\item $\idn \in \M^+$, $\M^+ + \M^+ \subseteq \M^+$ and for every $\a \in \M^+$ and $\b \in \mnr$ such that 
$\det \b \not\in \supp p(\M)$  we have that $\b^T \a \b \in \M^+$.
\item For every subset $\S \subseteq \snr$ there exists a subset $\tilde{\S} \subseteq \M_\S \cap \znr$ 
such that $\U_\S^n=\U_{\tilde{\S}}^n$. If $\S$ is finite, then so is $\tilde{\S}$.
\end{enumerate}
\end{lem}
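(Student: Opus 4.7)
Part (1) unpacks the definitions. For $p(\M^+)\subseteq p(\M)^+$, specialize the defining condition of $\M^+$ to $\vv=\e_1$, which lies outside $(\supp p(\M))^n$ because $\M$ is proper and hence $1\notin\supp p(\M)$. For the reverse inclusion $p(\M)^+\subseteq p(\M^+)$, I verify that $x\idn\in\M^+$ for every $x\in p(\M)^+$: compute $\vv^T(x\idn)\vv=x\sum v_i^2\in p(\M)$ by the quadratic-module axiom, and rule out landing in $\supp p(\M)$ by using primality of $\supp p(\M)$ (Lemma \ref{idlem}(5)) together with the absorbing-sum property of Lemma \ref{idlem}(1) applied to $\sum v_i^2\in p(\M)$, which would force each $v_i\in\supp p(\M)$ and contradict $\vv\notin(\supp p(\M))^n$.

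Part (2) is another round of definition chasing. The membership $\idn\in\M^+$ is the $x=1$ case of Part (1). Closure under addition follows from Lemma \ref{idlem}(1) applied to the two summands $\vv^T\a\vv,\vv^T\b\vv\in p(\M)$. For the congruence property, the hypothesis $\det\b\notin\supp p(\M)$ makes $\b$ invertible over the fraction field of the integral domain $R/\supp p(\M)$, so $\vv\notin(\supp p(\M))^n$ forces $\b\vv\notin(\supp p(\M))^n$; the identity $\vv^T\b^T\a\b\vv=(\b\vv)^T\a(\b\vv)\in p(\M)^+$ then finishes the argument because $\a\in\M^+$.

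Part (3) is the substantial one. The plan is to adapt the inductive size-reduction scheme of Proposition \ref{prop:ks}, reusing the very same sets $\S_\a=\bigcup_{i,j}\{\tilde{a}_{ij}^3,\b_{ij}\}$, and to upgrade the equivalence from $\a\in\M\iff\S_\a\subseteq\M$ to $\a\in\M^+\iff\S_\a\subseteq\M^+$ for every prime quadratic module $\M$. In the forward direction, the identities $\tilde{a}_{ii}=\e_i^T\a\e_i$ and $\tilde{a}_{ij}=(\e_i+\e_j)^T\a(\e_i+\e_j)$ place every $\tilde{a}_{ij}$ in $p(\M)^+$, hence by primality $\det\X_{\pm,ij}=\tilde{a}_{ij}^k\notin\supp p(\M)$. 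The permutation-transvection transformation $\a\mapsto\a_{ij}$ has determinant $\pm 1$, so Part (2) propagates $\a\in\M^+$ through it and then through $\X_{-,ij}$; equation (1) yields $\diag(\tilde{a}_{ij}^3,\b_{ij})\in\M^+$, from which testing against $\e_1$ and against vectors supported off the first coordinate isolates the two blocks as elements of the appropriate $\M^+$.

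For the reverse implication, $\S_\a\subseteq\M^+$ gives $\tilde{a}_{ij}^3\in p(\M)^+$, so by primality $\tilde{a}_{ij}\notin\supp p(\M)$; a case analysis on whether $v_1\in\supp p(\M)$ confirms $\diag(\tilde{a}_{ij}^3,\b_{ij})\in\M^+$, and equation (2) together with Part (2) produces $\tilde{a}_{ij}^4\a_{ij}\in\M^+$. The inverse of the $\pm 1$-determinant conjugation transports this to $\tilde{a}_{ij}^4\a\in\M^+$, and summing over $i,j$ (Part (2)) delivers $t\a\in\M^+$ with $t=\sum\tilde{a}_{ij}^4\in p(\M)^+$. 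The last and trickiest step transfers $t\a\in\M^+$ to $\a\in\M^+$ without dividing by $t$: the inclusion $\M^+\subseteq\M$ (clear since $\vv\in(\supp p(\M))^n$ still sends $\vv^T\a\vv$ into the ideal $\supp p(\M)\subseteq p(\M)$) together with $\S_\a\subseteq\M$ lets me invoke Proposition \ref{prop:ks} to conclude $\a\in\M$, which supplies $\vv^T\a\vv\in p(\M)$ for every $\vv$; combined with $t\cdot\vv^T\a\vv\in p(\M)^+$, $t\in p(\M)^+$, and primality, this excludes $\vv^T\a\vv$ from $\supp p(\M)$ whenever $\vv\notin(\supp p(\M))^n$, completing $\a\in\M^+$. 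The main obstacle is exactly this division-free transfer; the trick that resolves it is to separate the ``membership in $\M$'' part (already delivered by Proposition \ref{prop:ks}) from the ``strictness'' part (recovered via primality using the multiplier $t$). The finiteness clause is preserved at every step of the induction.
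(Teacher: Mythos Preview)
Your arguments for parts (1) and (2) are correct and essentially coincide with the paper's.

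For part (3) there is a genuine gap. You claim $\a\in\M^+\iff\S_\a\subseteq\M^+$ with the very sets $\S_\a=\bigcup_{i,j}\{\tilde a_{ij}^3,\b_{ij}\}$ from Proposition~\ref{prop:ks}. But those $\b_{ij}$ lie in $\mathcal S_{k-1}(R)$ and are embedded into $\snr$ by zero-padding (this is the convention fixed at the start of that proof). For any such padded matrix one has $\e_n^T\b_{ij}\e_n=0\in\supp p(\M)$, while $\e_n\notin(\supp p(\M))^n$ since $1\notin\supp p(\M)$. Hence $\b_{ij}\notin\M^+$ for \emph{every} prime quadratic module $\M$, and the forward implication fails already for $\a=\idn$ (where $\b_{11}=\idnm$, embedded as $\diag(1,\dots,1,0)$). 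What your ``testing against vectors supported off the first coordinate'' actually yields is the $(n-1)$-dimensional strict positivity of $\b_{ij}$, not membership in the $n$-dimensional $\M^+$; so the equivalence you iterate, and hence the equality $\U_\a^n=\U_{\S_\a}^n$, is not established.

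The paper avoids this by never splitting the block-diagonal matrix into padded pieces. Using only the $(1,1)$ corner it shows $\U_\a^n=\U_\d^n$ with $\d=\diag(a_{11}^3,\b)$ kept as an $n\times n$ matrix: $\a\in\M^+$ already gives $a_{11}=\e_1^T\a\e_1\in p(\M)^+$, so $\det\X_\pm=a_{11}^n\notin\supp p(\M)$ and part (2) transports positivity in both directions (the step $a_{11}^4\a\in\M^+\Rightarrow\a\in\M^+$ is the same ``division-free transfer'' you describe, but no sum over $i,j$ is needed). Iterating on the lower block of $\d$ produces a fully diagonal $\E\in\mnr$ with $\U_\a^n=\U_\E^n$, and for diagonal $\E$ one checks directly that $\E\in\M^+$ iff each diagonal entry lies in $p(\M)^+$; these entries, viewed as scalar matrices, give $\tilde\S\subseteq\znr$. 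Your elaborate machinery (all $(i,j)$, the multiplier $t$, the appeal back to Proposition~\ref{prop:ks}) is tailored to the $\K$-case, where $a_{11}$ might land in the support; for $\U$ that obstacle is absent, and the simpler route both works and sidesteps the padding problem.
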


\begin{proof}
The inclusion $p(\M^+) \subseteq p(\M)^+$ follows directly from the definitions of $p$ and $\M^+$. To prove
the opposite  inclusion, it suffices to show that $r\idn \in \M^+$ for every $r \in p(\M)^+$.
For every $\vv =(v_1,\ldots,v_n) \in R^n$, we have that $\vv^T (r\idn) \vv=r \sum_i v_i^2 \in p(M)$.
If $\vv^T (r\idn) \vv \in \supp p(\M)$ for some $\vv$, then either $r \in \supp p(\M)$ or $\sum_i v_i^2 \in \supp p(\M)$.
The first case contradicts the assumption $r \in p(\M)^+$ while the second one implies that $\vv \in (\supp p(\M))^n$.

The first claim of (2) follows from the definition of $\M^+$. The second claim follows from claim (1) of Lemma \ref{idlem}
and the definition of $\M^+$. To prove the third claim pick $\a \in \M^+$ and $\b \in \mnr$.
If $\b^T \a \b \not\in \M^+$, then there exists $\vv \in R^n$ such that $\vv^T \b^T \a \b \vv \not\in p(\M)^+$.
Since $\a \in \M^+$, it follows that $\b\vv \in (\supp p(\M))^n$. Since $\supp p(\M)$ is an ideal, it follows that
$(\det \b) \vv =(\cof \b)^T \b\vv \in (\supp p(\M))^n$. Since $\supp p(\M)$ is prime, it follows that either
$\det \b \in \supp p(\M)$ or $\vv \in (\supp \M)^n$.

The proof of assertion (3) is similar to the proof of Proposition \ref{prop:ks}. Namely, take
$$\a=\left[ \begin{array}{cc} a_{11} & \bb \\  \bb^T & \c \end{array} \right],
\quad
\X_{\pm} := \left[ \begin{array}{cc} a_{11} & \pm  \bb \\  0 & a_{11} \idnm \end{array} \right],
\quad
\d=\left[ \begin{array}{cc} a_{11}^3 & 0 \\ 0 & \b \end{array} \right]$$ 
where $\b=a_{11} (a_{11} \c - \bb^T \bb)$ and observe that $\X_{-}^T \a  \X_{-} = \d$ and $\X_{+}^T \d \X_{+} =a_{11}^4 \a$.
If $\a \in \M^+$, then $a_{11} \in p(\M)^+$ by the assertion (1). Since $\det \X_\pm =(a_{11})^n \in p(\M)^+$,
we can apply assertion (2) to get $\d \in \M^+$. Conversely, if $\d \in \M^+$, then $a_{11}^4 \a \in \M^+$
which implies that $\a \in \M^+$. Therefore $\U_\a^n=\U_\d^n$. By induction, there exists a diagonal matrix
$\E \in \mnr$ such that $\U_\a^n=\U_\E^n$. Then $\tilde{\S}$ is the set of all diagonal entries of $\E$.
\end{proof}

Write $\mathfrak{P}(\mnr)$ for the set of all prime quadratic modules on $\mnr$.
The \textit{Harrison topology} on $\mathfrak{P}(\mnr)$ is the topology generated 
by the sets $\U_\S^n$ where $\S$ is a finite subspace of $\snr$. 
The \textit{constructible topology} on $\mathfrak{P}(\mnr)$ is the topology generated 
by the sets $\U_\S^n$ and $\K_\S^n$ where $\S$ is a finite subspace of $\snr$.

By Proposition \ref{prop:ks} and assertion (3) of Lemma \ref{prop:us}, for every finite subset
$\S$ of $\snr$ there exist elements $g_1,\ldots,g_k,h_1,\ldots,h_l \in R$ such that
$\U_\S^n = \U_{\{g_1 \cdot \idn,\ldots,g_k \cdot \idn\}}^n = \U_{g_1 \cdot \idn}^n \cap \ldots \cap \U_{g_k \cdot \idn}^n$ and 
$\K_\S^n = \K_{\{h_1 \cdot \idn,\ldots,h_l \cdot \idn\}}^n = \K_{h_1 \cdot \idn}^n \cap \ldots \cap \K_{h_l \cdot \idn}^n$.
Therefore, the Harrison topology is already generated by the sets $\U_{r \cdot \idn}^n$, $r \in R$, 
and the constructible topology is already generated by the sets 
$\U_{r \cdot \idn}^n$ and $\K_{r \cdot \idn}^n$, $r \in R$.

\begin{prop}
\label{morita}
The mappings $\ind \colon \mathfrak{P}(R) \to \mathfrak{P}(\mnr)$, $P \mapsto \ind P$, and
$p \colon \mathfrak{P}(\mnr) \to \mathfrak{P}(R)$, $\Q \mapsto p(\Q)$, are homeomorphisms
w.r.t. the Harrison topology and also w.r.t. the constructible topology.
\end{prop}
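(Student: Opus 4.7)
By Proposition \ref{prime1}, $\ind$ and $p$ are mutually inverse bijections between $\mathfrak{P}(R)$ and $\mathfrak{P}(\mnr)$: the relation $p(\ind N)=N$ follows from $\ind(N)\cap\znr=N\cdot\idn$, while $\ind(p(\M))=\M$ comes from the uniqueness clause of that proposition. To promote this bijection to a homeomorphism for either topology, the plan is to verify that pulling back a subbasis of the target yields a subbasis of the source.

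By the remark immediately preceding the proposition, the Harrison topology on $\mathfrak{P}(\mnr)$ is generated by $\{\U_{\{r\cdot\idn\}}^n\mid r\in R\}$, and the constructible topology is generated by these together with $\{\K_{\{r\cdot\idn\}}^n\mid r\in R\}$. Applied to $n=1$, the same remark (where $\idn=1$) gives the subbases $\{\U_{\{r\}}^1\mid r\in R\}$ and $\{\K_{\{r\}}^1\mid r\in R\}$ for the corresponding topologies on $\mathfrak{P}(R)$. The whole proof therefore reduces to the two equalities
\begin{equation*}
p^{-1}(\K_{\{r\}}^1) = \K_{\{r\cdot\idn\}}^n \qquad\text{and}\qquad p^{-1}(\U_{\{r\}}^1) = \U_{\{r\cdot\idn\}}^n
\end{equation*}
for every $r\in R$; once these are established, both $p$ and $\ind$ are continuous (and hence homeomorphisms) with respect to each topology.

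The first equality is immediate: $\M\in p^{-1}(\K_{\{r\}}^1)$ says $r\in p(\M)$, which by Lemma \ref{plem}(1) is equivalent to $r\cdot\idn\in\M\cap\znr$, hence to $\M\in\K_{\{r\cdot\idn\}}^n$. For the second, one direction uses Lemma \ref{prop:us}(1) directly: if $r\cdot\idn\in\M^+$ then $r=p(r\cdot\idn)\in p(\M^+)=p(\M)^+$. The converse is exactly what is checked in the second half of the proof of Lemma \ref{prop:us}(1), namely that $r\in p(\M)^+$ forces $r\cdot\idn\in\M^+$; invoking it yields the required equivalence.

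I do not anticipate any genuine obstacle. All substantial content has already been absorbed into earlier results (Proposition \ref{prime1} for the bijection, Proposition \ref{prop:ks} and Lemma \ref{prop:us}(3) for the reduction to central subbases, Lemma \ref{plem}(1) for the $\K$-case, and Lemma \ref{prop:us}(1) for the $\U$-case), so the present proposition is essentially a bookkeeping step that assembles these facts at the level of subbases.
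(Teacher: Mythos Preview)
Your proposal is correct and follows essentially the same route as the paper: both proofs invoke the reduction to central subbases (established just before the proposition via Proposition~\ref{prop:ks} and Lemma~\ref{prop:us}(3)) and then verify the elementary equivalences $r\in p(\M)\Leftrightarrow r\cdot\idn\in\M$ and $r\in p(\M)^+\Leftrightarrow r\cdot\idn\in\M^+$. The only cosmetic difference is that the paper phrases these in terms of $\ind$ (showing it is open and continuous) while you phrase them in terms of $p$; since the two maps are mutually inverse bijections, this is the same argument.
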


\begin{proof}
Since $p$ is the inverse of $\ind$, it suffices to prove the claim about $\ind$.
Clearly, for every prime quadratic module $N$ on $R$ 
and every element $r \in R$ we have that $r \in N^+$ iff $r \idn \in (\ind N)^+$
and also that $r \in N$ iff $r \idn \in \ind N$. It follows that for every $r_1,\ldots,r_k \in R$
we have that $N \in \U_{\{r_1,\ldots,r_k\}}^1$ iff $\ind N \in \U_{\{r_1 \idn,\ldots,r_k \idn\}}^n$
and similarly, $N \in \K_{\{r_1,\ldots,r_k\}}^1$ iff $\ind N \in \K_{\{r_1  \idn,\ldots,r_k \idn\}}^n$.
 Hence $\ind$ is open and continuous in both topologies.
\end{proof}

\section{Orderings}
\label{sec5}

Recall that a subset $P$ of a unital commutative ring $R$ is an \textit{ordering} if
$P+P \subseteq P$, $P \cdot P \subseteq P$, $P \cup -P=R$ and $P \cap -P$ is a prime ideal.
Note that every ordering is  a prime quadratic module and a preordering.
For every homomorphism from $R$ into a real closed field $\kappa$, the set $\phi^{-1}(\kappa^2)$,
where $\kappa^2=\sum \kappa^2$ is the only ordering in $\kappa$, is an ordering in $R$. 
Moreover, every ordering on $R$ is of this form. Namely, for every ordering $P$ in $R$ 
write $\kappa_P$ for the real closure of $QF(R/\supp P)$ with respect to the ordering induced by $P$. 
Note that $P=\phi_P^{-1}(\kappa_P^2)$ where $\phi_P \colon R \to \kappa_P$ is the natural 
homomorphism.

A subset $\Q$ of $\snr$ will be called an \textit{ordering} 
if $\Q$ is a prime quadratic module on $\mnr$ and the set $p(\Q)$ is an ordering on $R$.
The set of all orderings on $\mnr$ will be denoted by $\sper \mnr$ and called the 
\textit{real spectrum} of $\sper \mnr$. The Harrison and the constructible topology
on $\sper \mnr$ are inherited from $\mathfrak{P}(\mnr)$. Proposition \ref{morita} 
implies that $\sper \mnr$ is homeomorphic to $\sper R$ in both topologies. Since 
$\K_{r \cdot \idn}^n \cap \sper \mnr = (\sper \mnr) \setminus \U_{-r \cdot \idn}^n$
for every $r \in R$, it follows that for every finite subset $\S$ of $\snr$
the set $\K_\S^n \cap \sper \mnr$ is closed in the relative Harrison topology.

\begin{lem}
\label{indord}
For every real closed field $\kappa$ and every homomorphism $\phi \colon R \to \kappa$, we have that
$$\ind \phi^{-1}(\kappa^2) =\phi_n^{-1}(\sn(\kappa)^2) \cap \snr$$
where $\phi_n \colon \mnr \to \mn(\kappa)$ is defined by $\phi_n([a_{ij}])=[\phi(a_{ij})]$ and 
where $\sn(\kappa)^2$ is the set of all positive semidefinite matrices in $\sn(\kappa)$.
\end{lem}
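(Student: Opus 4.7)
The plan is to sidestep any direct ``approximation'' argument (which would be delicate because $\phi(R)^n$ need not be dense in $\kappa^n$) and instead invoke the uniqueness part of Proposition~\ref{prime1}. Set $P := \phi^{-1}(\kappa^2)$ and $\Q := \phi_n^{-1}(\sn(\kappa)^2) \cap \snr$. Since $P$ is the pullback of the unique ordering of $\kappa$, it is an ordering on $R$ and in particular a prime quadratic module. I will show that $\Q$ is a prime quadratic module in $\mnr$ with $\Q \cap \znr = P \cdot \idn$; Proposition~\ref{prime1} then forces $\Q = \ind P$.

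First, the quadratic-module axioms for $\Q$ follow from standard facts about positive semidefinite matrices over a real closed field: $\phi_n(\idn)=\idn$ is psd, sums of psd matrices are psd, and $\phi_n(\c^T\a\c)=\phi_n(\c)^T\phi_n(\a)\phi_n(\c)$ is psd whenever $\phi_n(\a)$ is. Properness follows because $\phi_n(-\idn)=-\idn$ is not psd in $\sn(\kappa)$. The intersection $\Q\cap\znr$ is immediate: $r\idn\in\Q$ iff $\phi(r)\idn$ is psd iff $\phi(r)\in\kappa^2$ iff $r\in P$.

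The step that needs some care is primeness. Suppose $\a r^2\in\Q$ with $\a\in\snr$ and $r\in R$, so $\phi(r)^2\phi_n(\a)$ is psd in $\sn(\kappa)$. If $\phi(r)\neq 0$ then $\phi(r)^2>0$ in $\kappa$ and multiplying by its inverse shows $\phi_n(\a)$ is psd, i.e.\ $\a\in\Q$. If $\phi(r)=0$, then $r\in\ke\phi$, and I will verify that this equals $p(\Q\cap -\Q)$. Indeed, a symmetric matrix $\b\in\sn(\kappa)$ satisfies both $\b\succeq 0$ and $-\b\succeq 0$ only when $\b=0$ (the associated quadratic form vanishes identically on $\kappa^n$), so $\Q\cap -\Q=\phi_n^{-1}(0)\cap\snr=\sn(\ke\phi)$, whence $p(\Q\cap -\Q)=\ke\phi$, giving $r\in p(\Q\cap -\Q)$ as required.

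With $\Q$ shown to be a prime quadratic module whose intersection with $\znr$ is $P\cdot\idn$, the uniqueness statement of Proposition~\ref{prime1} applied to the prime quadratic module $P$ yields $\Q=\ind P=\ind\phi^{-1}(\kappa^2)$, which is exactly the claimed equality. The only mildly nontrivial point in this plan is the primeness verification in the degenerate case $\phi(r)=0$, which rests on the fact that a symmetric matrix over a real closed field whose quadratic form is identically zero must itself be zero; everything else is a routine translation between psd-ness in $\sn(\kappa)$ and the formal definition of $\Q$.
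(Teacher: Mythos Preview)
Your proof is correct and follows essentially the same route as the paper's: both set $P=\phi^{-1}(\kappa^2)$ and $\Q=\phi_n^{-1}(\sn(\kappa)^2)\cap\snr$, verify that $\Q$ is a prime quadratic module with $p(\Q)=P$ (equivalently $\Q\cap\znr=P\cdot\idn$), and then invoke the uniqueness part of Proposition~\ref{prime1}. The only difference is that you spell out $\supp\Q=\sn(\ker\phi)$ explicitly, whereas the paper just notes that $\phi(t)=0$ gives $t\,\idn\in\supp\Q$ directly.
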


\begin{proof}
Let us write $\Q=\phi_n^{-1}(\sn(\kappa)^2) \cap \snr$ and $P=\phi^{-1}(\kappa^2)$.
By Proposition \ref{prime1}, it suffices to show that the set $\Q$
is a prime quadratic module on $\mnr$ such that $p(\Q)=P$.
Clearly, $\idn \in \Q$ and $\Q+\Q \subseteq \Q$. If $\a \in \Q$ and $\b \in \mnr$,
then $\phi_n(\b^T \a \b)=\phi_n(\b)^T \phi_n(\a) \phi_n(\b) \in \phi_n(\b)^T \sn(\kappa)^2 \phi_n(\b) \subseteq \sn(\kappa)^2$,
hence $\b^T \a \b \in \Q$.

To show that $\Q$ is prime, pick $\a \in \snr$ and $t \in R$ such that $\a t^2 \in \Q$.
It follows that $\phi_n(\a)\phi(t)^2 \in \sn(\kappa)^2$. Hence, either $\phi(t)=0$ or
$\phi_n(\a) \in \sn(\kappa)^2$. In the first case, we have that $t \, \idn \in \supp \Q$,
and in the second case that $\a \in \Q$.

Pick $\a \in \Q$. It follows that $\phi(p(\a))=\phi(\e_1^T \a \e_1)=\e_1^T \phi_n(\a) \e_1 \subseteq
\e_1^T \sn(\kappa)^2 \e_1 \subseteq \kappa^2$, hence $p(\a) \in P$. Conversely, take any $b \in P$
and note that $\phi_n(b\, \idn)=\phi(b)\idn \in \kappa^2 \cdot \idn \subseteq \sn(\kappa)^2$.
Hence, $b\,\idn \in \Q$ and so $b \in p(\Q)$.
\end{proof}

In summary, every ordering $\Q$ on $\mnr$ is of the form
$$\Q = \ind(P)=\widehat{P^n}=((\phi_P)_n)^{-1}(\sn(\kappa_P)^2) \cap \snr$$
where $P=p(\Q)$ is an ordering on $R$. 
It follows that
$$\Q^+ = ((\phi_P)_n)^{-1}((\sn(\kappa_P)^2)^+) \cap \snr$$
where $(\sn(\kappa_P)^2)^+$ is the  set of all positive definite matrices over $\kappa_P$.

\begin{cor}
\label{corord}
For every $\a \in \snr$ and every ordering $P$ on $R$, the following are equivalent:
\begin{enumerate}
\item $\a \in \ind(P)$,
\item all principal minors of $\a$ belong to $P$,
\item all coefficients of the polynomial $\det(\a+\lambda \idn)$ belong to $P$.
\end{enumerate}
Moreover, the following are equivalent:
\begin{enumerate}
\item $\a \in \ind(P)^+$,
\item $\a \in \ind(P)$ and $\det \a \in P^+$,
\item all leading principal minors of $\a$ belong to $P^+$.
\end{enumerate}
\end{cor}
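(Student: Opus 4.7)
The plan is to reduce each of the six conditions to a classical fact about symmetric matrices over the real closed field $\kappa_P$, applied to the image $\a' := (\phi_P)_n(\a) \in \sn(\kappa_P)$. The summary immediately preceding the corollary already supplies the key translation: $\a \in \ind(P)$ iff $\a'$ is positive semidefinite over $\kappa_P$, and $\a \in \ind(P)^+$ iff $\a'$ is positive definite. In addition, by the definition of $P = \phi_P^{-1}(\kappa_P^2)$, an element $r \in R$ lies in $P$ iff $\phi_P(r) \geq 0$ in $\kappa_P$, and in $P^+ = P \setminus \supp P$ iff $\phi_P(r) > 0$.

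Next I will exploit that $(\phi_P)_n$ is a ring homomorphism, hence commutes with any polynomial expression in the matrix entries. In particular, any principal minor of $\a'$ will equal $\phi_P$ applied to the corresponding principal minor of $\a$, and every coefficient of $\det(\a' + \lambda \idn) \in \kappa_P[\lambda]$ will be the $\phi_P$-image of the corresponding coefficient of $\det(\a + \lambda \idn) \in R[\lambda]$. Combined with the first step, this converts each of the six conditions of the corollary into an equivalent statement about $\a'$ over $\kappa_P$, and the corollary reduces to standard linear algebra over a real closed field.

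At that stage, part (1) will follow from two classical characterizations of positive semidefiniteness over $\kappa_P$: a symmetric matrix $\a'$ is PSD iff all its principal minors are $\geq 0$, iff every coefficient of $\det(\a' + \lambda \idn) = \prod_i (\lambda + \mu_i)$ is $\geq 0$. The first I would prove by orthogonally diagonalizing $\a'$ (the spectral theorem is available since $\kappa_P$ is real closed) and noting that each principal submatrix is itself PSD; the second is a Descartes-type argument, since the coefficients are the elementary symmetric polynomials in the eigenvalues $\mu_i$, so nonnegative eigenvalues force the coefficients nonnegative, while conversely nonnegative coefficients force $\prod_i(t+\mu_i) > 0$ for every $t > 0$ in $\kappa_P$, ruling out any negative $\mu_i$. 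Part (2) will then follow from Sylvester's criterion applied to $\a'$ over $\kappa_P$: a symmetric matrix is positive definite iff it is PSD and has nonzero determinant, iff all its leading principal minors are strictly positive.

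I do not anticipate any serious obstacle. The reduction to $\kappa_P$ is mechanical, and the remaining linear-algebra content rests only on the spectral theorem for symmetric matrices and Sylvester's criterion, both of which hold over any real closed field because their proofs use only the Artin--Schreier axioms (nonnegative elements are squares, polynomials of odd degree have roots) together with Gram--Schmidt orthogonalization.
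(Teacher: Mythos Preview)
Your proposal is correct and follows exactly the same approach as the paper: the paper's proof of this corollary is the single sentence that it ``follows from Lemma~\ref{indord} and the usual characterizations of positive semidefinite and positive definite matrices over real closed fields,'' which is precisely your reduction to $\a' = (\phi_P)_n(\a) \in \sn(\kappa_P)$ followed by the classical PSD/PD criteria. If anything, you have supplied more detail than the paper does on why those criteria hold over an arbitrary real closed field.
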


Corollary \ref{corord} follows from Lemma \ref{indord} and the usual characterizations
of positive semidefinite and positive definite matrices over real closed fields.

For every real closed field $\kappa$ write $V_\kappa(R)$ for the set of all ring homomorphisms from $R$ to $\kappa$.

\begin{thm} (Artin-Lang Theorem)
\label{artinlang}
If $R$ is a finitely generated $\RR$-algebra, then the mapping
$$i \colon V_\RR(R) \to \sper \mnr, \quad \phi \mapsto \phi_n^{-1}(\sn(\RR)^2) \cap \snr$$
is one-to-one and its image is dense in the constructible topology.
\end{thm}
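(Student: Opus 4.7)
The plan is to reduce the statement to the classical Artin-Lang theorem for commutative rings by means of the homeomorphism $\ind \colon \sper R \to \sper \mnr$ from Proposition \ref{morita}. By Lemma \ref{indord}, for every $\phi \in V_\RR(R)$ one has
$$\phi_n^{-1}(\sn(\RR)^2) \cap \snr = \ind(\phi^{-1}(\RR^2)),$$
so the map $i$ factors as $i = \ind \circ j$, where $j \colon V_\RR(R) \to \sper R$ is the classical map $\phi \mapsto \phi^{-1}(\RR^2)$. Since $\ind$ is a homeomorphism for the constructible topology, it is enough to prove that $j$ is injective with dense image in $\sper R$.

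Injectivity is elementary. If $\phi_1, \phi_2 \in V_\RR(R)$ pull back the positive cone of $\RR$ to the same $P \in \sper R$, they share kernel $\supp P$ and induce the same positive cone on $R/\supp P$. The only ring endomorphism of $\RR$ is the identity (it fixes $\QQ$, sends squares to squares and therefore preserves the order, and is then forced to be the identity by rational approximation), so the two embeddings $R/\supp P \hookrightarrow \RR$ coincide and $\phi_1 = \phi_2$.

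For density in the constructible topology on $\sper R$, the basic constructible sets have the form
$$\U_{\{g_1,\ldots,g_k\}}^1 \cap \K_{\{h_1,\ldots,h_l\}}^1 = \{P \in \sper R \mid g_i \in P^+,\ h_j \in P\},$$
that is, they encode finitely many strict and weak polynomial inequalities. Fixing a presentation $R = \RR[x_1,\ldots,x_d]/I$, such a set is nonempty iff there exists a tuple $x \in \RR^d$ with $f(x) = 0$ for every $f \in I$, $g_i(x) > 0$, and $h_j(x) \ge 0$. This is the content of the classical Artin-Lang theorem for finitely generated $\RR$-algebras, and rests on Tarski-Seidenberg transfer: a consistent system of polynomial equalities and (strict and weak) inequalities over $\RR$ that has a solution in some real closed extension already has a solution in $\RR$. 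The image of $j$ is in bijection with these real points, so density follows.

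The only substantive input is thus the classical density result, which I would invoke as a black box; once granted, the matrix statement follows formally from Proposition \ref{morita} and Lemma \ref{indord}, requiring no further matrix-theoretic argument.
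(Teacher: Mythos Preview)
Your proof is correct and follows essentially the same route as the paper: factor $i = \ind \circ j$ via Lemma~\ref{indord}, use that $\ind$ is a homeomorphism by Proposition~\ref{morita}, and reduce to the classical Artin--Lang theorem for $j$. The only difference is that you spell out the injectivity of $j$ explicitly (via the uniqueness of ring endomorphisms of $\RR$ and a Dedekind-cut argument), whereas the paper absorbs both injectivity and density into the single citation of the classical result.
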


\begin{proof}
By Lemma \ref{indord}, $i$ is the compositum of the mapping $j \colon V_\RR(R) \to \sper R$, $j(\phi)=\phi^{-1}(\RR^2)$,
and the mapping $\ind \colon \sper R \to \sper \mnr$. 
Since $\ind$ is  homeomorphism with respect to the constructible topologies
on $\sper R$ and $\sper \mnr$, the theorem follows from the usual Artin-Lang theorem
\cite[2.4.3. Theorem]{mm}.
\end{proof}

\section{Stellens\" atze}
\label{sec6}

In this section, we will prove a generalization of the Krivine-Stengle theorem 
whose special case is the result that was announced in the Introduction.

Recall that an \textit{affine $\RR$-algebra} is a finitely generated 
commutative unital $\RR$-algebra. 

\begin{thm}(Positivdefinitheitsstellensatz)
\label{thm1}
For every finite subset $\S$ of $\snr$ and every element $\F \in \snr$, the following are equivalent:
\begin{enumerate}
\item[(1)] For every real closed field $\kappa$ and every $\phi \in V_\kappa(R)$
such that $\phi_n(\s)$ is positive semidefinite for every $\s \in \S$, we have
that $\phi_n(\F)$ is positive definite. 
\item[(1')] For every ordering $\Q \in \sper \mnr$ which contains $\S$, we have that $\F \in \Q^+$.
\item[(2)] There exist $\b,\c \in \T_\S^n$ such that $\F(\idn+\b)=(\idn+\b)\F=\idn+\c$.
\item[(2')] There exist $\b,\c \in \T_\S^n$ such that $\F\b=\b\F=\idn+\c$.
\item[(3)] There exist $t \in p(\T_\S^n)$ and $\V \in \T_\S^n$ such that $\F(1+t)=\idn+\V$.
\item[(3')] There exist $t \in p(\T_\S^n)$ and $\V \in \T_\S^n$ such that $\F t=\idn+\V$.
\end{enumerate}
If $R$ is an affine $\RR$-algebra, then assertions (1)-(3') are equivalent to
\begin{enumerate}
\item[(1'')] For every $\phi \in V_\RR(R)$
such that $\phi_n(\s)$ is positive semidefinite for every $\s \in \S$, we have that $\phi_n(\F)$
is positive definite.
\end{enumerate}
\end{thm}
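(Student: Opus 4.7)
I would begin by establishing (1) $\Leftrightarrow$ (1'), which is immediate from Lemma \ref{indord} and Proposition \ref{morita}: they identify orderings $\Q \in \sper\mnr$ with $\phi_n^{-1}(\sn(\kappa)^2) \cap \snr$ for some homomorphism $\phi\colon R \to \kappa$ into a real-closed field, and identify $\Q^+$ with the preimage of positive-definite matrices. Next I would dispose of the trivial implications (3)$\Rightarrow$(3'), (3)$\Rightarrow$(2), (3')$\Rightarrow$(2'), and (2)$\Rightarrow$(2') by taking $\b = (1+t)\idn$, $\b = t\idn$, or $\b' = \idn+\b$ (using $1 = p(\idn) \in p(\T_\S^n)$). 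The implication (2')$\Rightarrow$(1') is a short linear-algebra argument: at any ordering $\Q \supseteq \S$, $\phi_n(\b)$ and $\phi_n(\c)$ are positive semidefinite while $\phi_n(\F)\phi_n(\b) = \phi_n(\b)\phi_n(\F) = \idn + \phi_n(\c)$ is positive definite; since $\phi_n(\F)$ and $\phi_n(\b)$ are commuting symmetric matrices, they are simultaneously diagonalizable, and positivity of each product-eigenvalue forces $\phi_n(\F) \succ 0$.

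The content lies in proving (1')$\Rightarrow$(2'), for which I would follow the scheme sketched in the Introduction. First I would apply Proposition \ref{prop:ks} to replace $\S$ by a finite central set $\tilde\S = \{h_1\idn,\ldots,h_r\idn\} \subseteq \M_\S^n \cap \znr$ with the same prime quadratic modules, so that $\T_{\tilde\S}^n \subseteq \T_\S^n$ and an identity in the smaller preordering suffices. Via Proposition \ref{morita} and Corollary \ref{corord}, hypothesis (1') asserts that $\F$ is positive definite over the real closure $\kappa_P$ of every ordering $P$ on $R$ with $h_1,\ldots,h_r \in P$, so each coefficient $c_k$ of
\[
q(\lambda) := \det(\F - \lambda\idn) - (-\lambda)^n = \sum_{k=1}^n c_k(-\lambda)^{n-k} \in R[\lambda]
\]
is an elementary symmetric function of the positive eigenvalues of $\F$ and therefore lies in $P^+$. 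The polynomial $q$ is then strictly positive at every ordering of $R[\lambda]$ containing $h_1,\ldots,h_r,-\lambda$, and the abstract Krivine--Stengle Positivstellensatz in $R[\lambda]$ yields $t_1,t_2 \in T_{h_1,\ldots,h_r,-\lambda}$ with $q(\lambda)\,t_1(\lambda) = 1 + t_2(\lambda)$. Decomposing $t_i(\lambda) = A_i(\lambda) + B_i(\lambda)(-\lambda)$ with $A_i, B_i \in T_{h_1,\ldots,h_r}(R[\lambda])$ and specializing $\lambda \mapsto \F$, the sums of squares become $\sum_j g_j(\F)^T g_j(\F) \in \Sigma_n(R)$ by symmetry of $\F$, while the scalar factors $h^\epsilon$ lie in $p(\T_{\tilde\S}^n)$, so $A_i(\F), B_i(\F) \in \T_{\tilde\S}^n$. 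Cayley--Hamilton gives $q(\F) = (-1)^{n+1}\F^n$; absorbing even powers $\F^{2\ell} = (\F^\ell)^T\F^\ell$ into $\T_{\tilde\S}^n$ via the quadratic-module axiom and peeling off one remaining factor of $\F$, a short rearrangement produces $\F\b' = \b'\F = \idn + \c'$ with $\b',\c' \in \T_{\tilde\S}^n \subseteq \T_\S^n$, which is (2').

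The main obstacle will be the strengthening (1')$\Rightarrow$(3), where $\b$ is required to be central: the $\b'$ produced above is a polynomial in $\F$, typically not a scalar multiple of $\idn$. I would establish centrality by induction on $n$ using Schur complements, following \cite{c}. Partitioning $\F = \left[\begin{smallmatrix} a & \vv^T \\ \vv & \c\end{smallmatrix}\right]$, the scalar $a$ is strictly positive at every relevant ordering, so the scalar Positivstellensatz furnishes $s, s' \in T_{h_1,\ldots,h_r}$ with $as = 1+s'$; meanwhile the Schur complement $a\c - \vv\vv^T$ is an $(n-1)\times(n-1)$ symmetric matrix positive definite at every such ordering, and so the inductive hypothesis supplies a central inverse modulo $\T$. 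Combining the two should yield a central $1+t \in 1+p(\T_\S^n)$ with $\F(1+t) = \idn + \V$ and $\V \in \T_\S^n$, establishing (3) and closing all remaining equivalences. For the affine case, (1'')$\Leftrightarrow$(1') follows from Theorem \ref{artinlang}: the set $\{\Q \in \sper\mnr : \S \subseteq \Q,\ \F \notin \Q^+\}$ is clopen in the constructible topology, so if nonempty, density of $V_\RR(R)$ produces a real evaluation contradicting (1'').
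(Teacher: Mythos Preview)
Your proposal is correct and, for the substantive implication $(1)\Rightarrow(3)$, follows exactly the paper's route: pass to scalar constraints via Proposition~\ref{prop:ks}, apply the scalar Positivstellensatz to the top-left entry, and recurse on the Schur complement (the explicit algebraic identity you allude to is spelled out in full in the paper, citing \cite{c}).

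The one structural difference is your separate Cayley--Hamilton argument for $(1')\Rightarrow(2')$. This is valid, but redundant: once $(3)$ is established, $(2')$ follows trivially, so the paper omits any independent proof of $(2')$. Your Cayley--Hamilton route---applying the abstract Positivstellensatz to $\det(\F-\lambda\idn)-(-\lambda)^n$ over the preordering generated by $h_1,\ldots,h_r,-\lambda$ in $R[\lambda]$ and then substituting $\lambda\mapsto\F$---is precisely the technique the paper reserves for Theorem~\ref{thm2} (the Positiv\emph{semi}definitheitsstellensatz), where there is no Schur-complement alternative. So you have, in effect, previewed the proof of the next theorem; nothing is wrong, but in the present theorem it buys nothing beyond what the Schur-complement induction already gives.
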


\begin{proof}
The equivalence of (1) and (1') follows from the comments after Lemma \ref{indord} and the
equivalence of (1') and (1'') follows from Theorem \ref{artinlang}.
Clearly, (3) implies (2) and (3') and either (2) or (3') implies (2').

To prove that (2') implies (1), note that
for every $\phi \in V_\kappa(R)$
such that $\phi_n(\s)$ is positive semidefinite for every $\s \in \S$,
we have that $\phi_n(\b)$ and $\phi_n(\c)$ are  positive semidefinite.
Moreover, the relation $\phi_n(\F)\phi_n(\b)=\phi_n(\b)\phi_n(\F)=\idn+\phi_n(\c)$
implies that $\phi_n(\b)$ is positive definite.
It follows that $\phi_n(\F)=\phi_n(\b)^{-1}(\idn+\phi_n(\c))=
\phi_n(\b)^{-1/2}\phi_n(\idn+\phi_n(\c))\phi_n(\b)^{-1/2}$ is positive definite.

The proof that (1) implies (3) is an obvious generalization of the proof of the main 
theorem from \cite{c}. We summarize it here for the sake of completeness.

Suppose that (1) is true. By Proposition \ref{prop:ks}, we can pick a finite set
$\tilde{\S} \subseteq \M_\S^n \cap \znr$ such that $\K_\S^n=\K_{\tilde{\S}}^n$.
Note that $\T_{\tilde{\S}}^n \subseteq \T_\S^n$. 
Pick any real closed field $\kappa$ and any $\phi \in V_\kappa(R)$ such that
$\phi(g) \ge 0$ for every $g \in \tilde{\S}$. Since $\K_\S^n=\K_{\tilde{\S}}^n$,
it follows that $\phi_n(\s)$ is positive semidefinite for every $\s \in \S$.
By (1), it follows that $\phi_n(\F)$ is positive definite. If we write
$$\F=\left[ \begin{array}{cc} f_{11} & \gg \\ \gg^T & \h \end{array} \right]$$ 
then clearly $\phi(f_{11})>0$ and $\phi_{n-1}(f_{11}\h -\gg^T \gg)$ 
is positive definite. 

By \cite[2.5.2 Abstract Positivstellensatz]{mm}, there exist $t,t' \in T_{\tilde{\S}}$
such that $f_{11} t=1+t'$. Note that $s_1:=t+t'$ and $u_1:=t'+f_{11}^2 t$
belong to $T_{\tilde{\S}}$ and satisfy
$$(1+s_1)f_{11} =1+u_1.$$
By induction, there exist $s \in T_{\tilde{\S}}$ and $\u \in \T_{\tilde{\S}}^{n-1}$ such that 
$$(1+s)(f_{11}\h -\gg^T \gg)=\idnm+\u.$$ 
Write 
\begin{eqnarray*}
\tilde{\gg} & = & (1+s_1)\gg,\\
c & = & \tilde{\gg}\tilde{\gg}^T \in T_{\tilde{\S}},\\
\mathbf{S} & = & c\, \idnm-\tilde{\gg}^T\tilde{\gg} \in \T_{\tilde{\S}}^{n-1},\\
v & = & 1+c, \\
d & = & v(1+u_1)^2-1 \in T_{\tilde{\S}},\\
\d & = & (v(1+u_1)^2 +v^2(2u_1+u_1^2))\idnm+(v+1)\mathbf{S} \in \T_{\tilde{\S}}^{n-1},\\
e & = & (1+s)(1+u_1)^2-1 \in T_{\tilde{\S}},\\ 
\E & = & (1+s_1)^2 (\idnm+\u)-\idnm \in \T_{\tilde{\S}}^{n-1},\\
\R & = & \left[ \begin{array}{cc} 1+u_1 & \tilde{\gg} \\ 0 & (1+u_1)\idnm \end{array} \right],\\
\Z & = & \left[ \begin{array}{cc} v (1+u_1) & (1+v)\tilde{\gg} \\ 0 & 0 \end{array} \right]
\end{eqnarray*}
and note that
$$v(1+v)(1+s)(1+s_1)(1+u_1)^3 \left[ \begin{array}{cc} f_{11} & \gg \\ \gg^T & \h  \end{array} \right] = $$
$$=\idn+\left[ \begin{array}{cc} d & 0 \\ 0 & \d \end{array} \right]
+v(1+v)\R^T \left[ \begin{array}{cc} e & 0 \\ 0 & \E \end{array} \right]\R+\Z^T \Z \in \idn+\T_{\tilde{\S}}^n.$$
Clearly, $v(1+v)(1+s)(1+s_1)(1+u_1)^3 \in 1+T_{\tilde{\S}}\subseteq p(\idn+\T_{\S}^n)$ 
and $\idn+\T_{\tilde{\S}}^n \subseteq \idn+\T_{\S}^n$.
\end{proof}

We continue with a generalization of Krivine-Stengle Nichtnegativstellensatz.

\begin{thm}(Positivsemidefinitheitsstellensatz)
\label{thm2}
For every finite subset $\S$ of $\snr$ and every $\F \in \snr$ the following are equivalent:
\begin{enumerate}
\item[(1)] For every real closed field $\kappa$ and every $\phi \in V_\kappa(R)$
such that $\phi_n(\s)$ is positive semidefinite for every $\s \in \S$, we have
that $\phi_n(\F)$ is positive semidefinite. 
\item[(1')] Every ordering on $\mnr$ which contains $\S$ also contains $\F$.
\item[(2)] There exist $k \in \NN$ and $\b,\c \in \T_\S^n$ such that $\F\b=\b\F=\F^{2k}+\c$.
\end{enumerate}
If $R$ is an affine $\RR$-algebra, then (1), (1') and (2) are equivalent to
\begin{enumerate}
\item[(1'')] For every $\phi \in V_\RR(R)$
such that $\phi_n(\s)$ is positive semidefinite for every $\s \in \S$, $\phi_n(\F)$ is also positive semidefinite.
\end{enumerate}
\end{thm}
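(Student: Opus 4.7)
I would follow the template of Theorem \ref{thm1}, replacing the scalar Positivstellensatz by the scalar Nichtnegativstellensatz (Theorem \ref{theorema}(3), or its abstract ring version, cf.\ \cite{mm}) as the external ingredient, and using the Cayley--Hamilton theorem to lift the resulting scalar identity back to $\mnr$.

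The equivalences (1)$\iff$(1') and, for an affine $\RR$-algebra $R$, (1')$\iff$(1'') carry over verbatim from Theorem \ref{thm1}: the first from the characterisation of orderings on $\mnr$ in Lemma \ref{indord} together with the bijection $\ind$ of Proposition \ref{morita}; the second from Theorem \ref{artinlang}, since the target sets are constructible. The direction (2)$\Rightarrow$(1) is a short eigenvalue check: applying $\phi_n$ to $\F\b=\F^{2k}+\c$ and pairing with an eigenvector $v\in\kappa^n$ of $\phi_n(\F)$ of eigenvalue $\lambda$ gives
$$\lambda\,v^T\phi_n(\b)v = \lambda^{2k}\|v\|^2 + v^T\phi_n(\c)v.$$
Since $\phi_n(\b),\phi_n(\c)\succeq 0$, the case $\lambda<0$ forces the left side $\le 0$ and the right side $>0$, a contradiction, so $\phi_n(\F)\succeq 0$.

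The substance is (1)$\Rightarrow$(2). First apply Proposition \ref{prop:ks} to replace $\S$ by a finite central set $\tilde\S=\{h_1\idn,\ldots,h_r\idn\}\subseteq \M_\S^n\cap\znr$ with $\K_\S^n=\K_{\tilde\S}^n$; condition (1) continues to hold with $\tilde\S$ in place of $\S$, and it suffices to produce $\b,\c\in\T_{\tilde\S}^n\subseteq\T_\S^n$. Consider the scalar polynomial
$$p(\lambda) := \det(\F-\lambda\idn)-(-\lambda)^n \in R[\lambda].$$
For any homomorphism $\phi\colon R[\lambda]\to \kappa$ (with $\kappa$ real closed) sending each $h_i$ to a nonnegative element of $\kappa$ and $\lambda$ to some $\lambda_0\le 0$, hypothesis (1) forces $\phi_n(\F)\succeq 0$, so its $\kappa$-eigenvalues $\mu_j$ satisfy $\mu_j-\lambda_0\ge -\lambda_0\ge 0$, whence $\phi(p)=\prod_j(\mu_j-\lambda_0)-(-\lambda_0)^n\ge 0$. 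The abstract Nichtnegativstellensatz applied in $R[\lambda]$ therefore produces $k\in \NN$ and $t,t'\in T_{\{h_1,\ldots,h_r,-\lambda\}}\subseteq R[\lambda]$ with $pt=p^{2k}+t'$.

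Writing $t=u_0+(-\lambda)u_1$ and $t'=s_0+(-\lambda)s_1$ with $u_i,s_i\in T_{\{h_1,\ldots,h_r\}}$, I would substitute $\lambda=\F$ formally. By Cayley--Hamilton $\det(\F-\lambda\idn)$ vanishes at $\lambda=\F$, so $p(\F)=(-1)^{n+1}\F^n$ and $p(\F)^{2k}=\F^{2kn}$. The equation becomes
$$(-1)^{n+1}\F^n\bigl(u_0(\F)-\F u_1(\F)\bigr)=\F^{2kn}+s_0(\F)-\F s_1(\F).$$
The main obstacle is the final bookkeeping step: rewriting this as $\F\b=\F^{2kn}+\c$ with $\b,\c\in\T_{\tilde\S}^n$. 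A case distinction on the parity of $n$ shows that after moving $\F s_1(\F)$ to the left, every surviving term on the $\F$-side carries an \emph{even} power of $\F$, which can be absorbed into the quadratic module via $\F^{2j}u(\F)=(\F^j)^T u(\F)\F^j$. Each $u_i(\F)$ and $s_i(\F)$ already lies in $\T_{\tilde\S}^n$, because squares in $R[\lambda]$ evaluated at the symmetric matrix $\F$ yield elements of $\Sigma_n(R)$ and scalar factors $h_{j_1}\cdots h_{j_l}$ are incorporated through $\a^T(h_{j_1}\cdots h_{j_l}\idn)\a$. Finally $\b\F=\F\b$ because $\b$ ends up as a polynomial in $\F$ with coefficients in $R$, giving both (2) and the stronger statement that $\b,\c\in\T_{\tilde\S}^n$.
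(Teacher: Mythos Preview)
Your argument is correct, but your route for the two nontrivial implications differs from the paper's.

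For $(2)\Rightarrow(1)$ the paper does not pair with an eigenvector. It instead observes that $\b$ and $\c$ commute with $\F$, decomposes $\kappa^n=\im\phi_n(\F)\oplus\ke\phi_n(\F)$ into $\phi_n(\F)$-, $\phi_n(\b)$-, $\phi_n(\c)$-invariant pieces, and checks directly that the restriction $\F_1$ of $\phi_n(\F)$ to the image is a product of two commuting positive semidefinite matrices. Your eigenvector check is shorter; it is legitimate because symmetric matrices over a real closed field are orthogonally diagonalisable, and it avoids having to argue that $\b_1$ is invertible.

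For $(1)\Rightarrow(2)$ your approach is precisely the alternative the paper records in the Remark following Theorem~\ref{thm2}: work in $R[\lambda]$, show $\det(\F-\lambda\idn)-(-\lambda)^n$ is nonnegative on the orderings containing $\tilde\S\cup\{-\lambda\}$, and substitute $\lambda=\F$. The paper's main proof is more streamlined: it passes to the quotient $R'=R[\lambda]/(\det(\F-\lambda\idn))$, observes that for any $\phi\colon R'\to\kappa$ with $\phi(g)\ge 0$ for $g\in\tilde\S$ the image $\phi(\bar\lambda)$ is an eigenvalue of $\phi_n(\F)$ and hence nonnegative, and then applies the abstract Nichtnegativstellensatz directly to $\bar\lambda$ in $R'$. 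This yields $\bar\lambda\,p(\bar\lambda)=\bar\lambda^{2l}+q(\bar\lambda)$ with $p,q\in T^{R'}_{\tilde\S}$; Cayley--Hamilton makes the substitution $\bar\lambda\mapsto\F$ into a ring homomorphism $R'\to R[\F]$, so one reads off $\F\,p(\F)=\F^{2l}+q(\F)$ with $\b=p(\F),\c=q(\F)\in\T^n_{\tilde\S}$ immediately. The payoff is that no parity bookkeeping is needed: the identity already has the shape $\F\b=\F^{2l}+\c$. Your version trades this elegance for working in the more concrete ring $R[\lambda]$, at the cost of the case split on $n$ that you outline (and which indeed goes through: in each parity exactly one of $\F^n u_0(\F)$, $\F^{n+1}u_1(\F)$ stays on the $\F$-side and the other moves right, and the remaining even powers of $\F$ are absorbed as $(\F^{j})^T u(\F)\F^{j}$).
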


\begin{proof}
The equivalence of (1) and (1') follows from the comments after Lemma \ref{indord}. The
equivalence of (1') and (1'') follows from Theorem \ref{artinlang}.

To prove that (2) implies (1) note that the relation $\F \b= \b \F=\F^{2k}+\c$ implies
that $\c$ also commutes with $\F$ and $\b$. Pick a homomorphism $\phi$ from $R$ into a real closed field $\kappa$
such that $\phi_n(\s)$ is positive semidefinite for every $\s \in \S$. Note that
$\phi_n(\b)$ and $\phi_n(\c)$ are also positive semidefinite because they belong to $\T_\S^n$.
Clearly $V_1:=\im \phi_n(\F)$ and $V_2:= \ke \phi_n(\F)$ are subspaces of $\kappa^n$ which satisfy $V_1 \oplus V_2=\kappa^n$.
It is also clear that $V_1$ and $V_2$ are invariant for $\phi_n(\F)$, $\phi_n(\b)$ and $\phi_n(\c)$. 
Write $\F_i=\phi_n(\F)|_{V_i}$ and similarly for $\b$ and $\c$. Clearly, $\F_1$ is invertible and $\F_2=0$.
Since $\phi_n(\b)$ and $\phi_n(\c)$ are positive semidefinite so are $\b_1$ and $\c_1$. It follows that
$\F_1^{2k}+\c_1$ is positive semidefinite and invertible, so that $\F_1 \b_1$ is also positive semidefinite
and invertible. It follows that $\b_1$ is invertible. Since $\F_1=(\F_1^{2k}+\c_1)\b_1^{-1}$ is a product
of two commuting positive semidefinite matrices, it is positive semidefinite. Therefore $\phi_n(\F) = \F_1 \oplus 0$
is positive semidefinite as claimed.

It remains to show that (1) implies (2). By Proposition \ref{prop:ks}, we can pick a finite set 
$\tilde{\S} \subseteq \M_\S^n \cap \znr$ such that $\K_\S^n=\K_{\tilde{\S}}^n$.
We identify $\tilde{\S}$ with $p(\tilde{\S}) \subseteq R \subseteq R'$ where $R'=R[\lambda]/(\det(\F-\lambda \idn))$.
For every homomorphism $\phi \colon R' \to \kappa$ where $\kappa$ is a real closed
field the following is true: if $\phi(g) \ge 0$ for every $g \in \tilde{\S}$,
then $\phi(\bar{\lambda})\ge 0$. By \cite[2.5.2 Abstract Positivstellensatz]{mm},
there exist $l \in \NN$ and $p(\bar{\lambda}),q(\bar{\lambda})\in T^{R'}_{\tilde{\S}}$ such that
$\bar{\lambda}p(\bar{\lambda})=\bar{\lambda}^{2l}+q(\bar{\lambda})$. By the Cayley-Hamilton Theorem,
the natural homomorphism from $R[\lambda]$ to $R[F] \subset \mnr$ factors through $R'$.
It follows that $\F p(\F)=\F^{2l}+q(\F)$ and $p(\F),q(\F) \in \T^{R[\F]}_{\tilde{\S}} \subseteq \T^n_{\tilde{\S}} \subseteq \T^n_\S$.
Take $\b=p(\F)$ and $\c=q(\F)$.
\end{proof}

\begin{rem}
We can also give a less abstract but slightly longer
proof of the direction $(1) \Rightarrow (2)$
which is based on the observation that every ordering on
$R[\lambda]$ which contains $\tilde{\S}\cup \{-\lambda\}$, 
also contains the element $\det(\F-\lambda \idn)-(-\lambda)^n \in R[\lambda]$.
\end{rem}

\begin{rem}
Theorem \ref{thm2} does not seem to imply the Artin's theorem for matrix polynomials 
because the element $\b$ in its assertion (2) may not be central. 
Recall that
Artin's theorem for matrix polynomials 
says that for every $\a \in \sn(\RR[x_1,\ldots,x_d])$ 
which is positive semidefinite in every $x \in \RR^d$ there exists a nonzero $c \in \RR[x_1,\ldots,x_d]$
such that $\a c^2 \in \Sigma_n(\RR[x_1,\ldots,x_d])$.
It was proved independently in \cite{gr} and \cite{ps}. 
For the first constructive proof see \cite{sch2}.
We can deduce it from the formula (\ref{eq3}) 
in the proof of Proposition \ref{prop:ks}. 
\end{rem}

The following example shows that there exist $\F$ and $\S$ 
such that no element $\b$ from the assertion (2) of Theorem \ref{thm2} is central.

\begin{ex}
\label{noncentral}
Take $\S=\{x^3\}$ and $$\F=\left[ \begin{array}{cc} x & 0 \\ 0 & 1 \end{array} \right].$$
Clearly, $\F$ is positive semidefinite on $\K_\S$. Howewer there is no (central!) $b \in T_S$ such that $\F b=\F^{2k}+\c$
for some $k \in \NN$ and $\c=[c_{ij}] \in \T_\S^n$. Namely, if such a $b$ exists, then $xb=x^{2k}+c_{11}$ and $b=1+c_{22}$ for some
$c_{11}=u_1+x^3 v_1$ and $c_{22}=u_2+x^3 v_2$ with $u_i,v_i \in \sum \RR[x]^2$, then 
$x(1+u_2(x)+x^3 v_2(x))=x^{2k}+u_2(x)+x^3 v_2(x)$. Since $x$ divides $x^{2k}+u_2(x)$ which belongs to $\sum \RR[x]^2$,
it follows that also $x^2$ divides $x^{2k}+u_2(x)$. After canceling $x$ on both sides, we get that
the right-hand side is divisible by $x$ while the left-hand side is not.
\end{ex}

If we take $\F=-\idn$ in Theorem \ref{thm2}, we obtain the following result 
(which can also be proved more directly):

\begin{cor}
\label{cor1}
For every finite subset $\S$ of $\snr$, the following are equivalent:
\begin{enumerate}
\item[(1)] There is no homomorphism $\phi$ from $R$ into a real closed field
such that $\phi_n(\s)$ is positive semidefinite for every $\s \in \S$.
\item[(1')] There is no ordering on $\mnr$ which contains $\S$.
\item[(2)] $-\idn \in \T_\S^n$.
\end{enumerate}
If $R$ is an affine $\RR$-algebra, then (1), (1') and (2) are equivalent to
\begin{enumerate}
\item[(1'')] There is no $\phi \in V_\RR(R)$ such that $\phi_n(\s)$ is positive semidefinite
for every $\s \in \S$.
\end{enumerate}
\end{cor}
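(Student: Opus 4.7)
The plan is to obtain Corollary \ref{cor1} as the specialization of Theorem \ref{thm2} to the case $\F=-\idn$. The key observation powering this reduction is that, for any real closed field $\kappa$ and any $\phi \in V_\kappa(R)$, the matrix $\phi_n(-\idn)=-\idn$ is never positive semidefinite in $\mn(\kappa)$, since $-1 \notin \kappa^2$. Consequently the conclusion ``$\phi_n(\F)$ is positive semidefinite'' in assertion (1) of Theorem \ref{thm2} is never satisfied, so the conditional statement collapses to ``no such $\phi$ exists''—exactly assertion (1) of the corollary. The same substitution turns (1') of Theorem \ref{thm2} into (1') of the corollary: since prime quadratic modules (and hence orderings) are proper by definition, ``every ordering containing $\S$ also contains $-\idn$'' is equivalent to ``no ordering contains $\S$''. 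The rephrasing of (1'') is handled identically via Theorem \ref{artinlang}.

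Next I would check the algebraic equivalence. Setting $\F = -\idn$ in assertion (2) of Theorem \ref{thm2} gives $(-\idn)\b = \b(-\idn) = (-\idn)^{2k}+\c = \idn + \c$ with $\b,\c \in \T_\S^n$, which rearranges to $-\idn = \b + \c \in \T_\S^n$, i.e.\ (2) of the corollary. Conversely, given $-\idn \in \T_\S^n$ one takes $\b=-\idn \in \T_\S^n$, $\c=0$ and $k=1$, and all of the equations of Theorem \ref{thm2}(2) hold trivially. Thus the equivalence is a clean instance of the general theorem.

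A more direct proof, as hinted by the statement, would proceed by a Zorn's lemma extension argument: assuming $-\idn \notin \T_\S^n$, one enlarges $\T_\S^n$ to a maximal proper preordering $\Q$ in $\mnr$ and then verifies that $\Q$ is an ordering. The main obstacle in that alternative route is to show that maximality forces primality and that $p(\Q) \cup -p(\Q) = R$; this can be done by reducing to the commutative case through the map $p$ and invoking the structural results of Lemma \ref{idlem}. Deducing the corollary from Theorem \ref{thm2}, however, sidesteps both difficulties at once, which makes the specialization argument above the cleanest route.
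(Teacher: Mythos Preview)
Your proposal is correct and follows exactly the paper's approach: the corollary is stated immediately after Theorem \ref{thm2} with the remark ``If we take $\F=-\idn$ in Theorem \ref{thm2}, we obtain the following result (which can also be proved more directly),'' and you have spelled out that specialization accurately. The only minor divergence is in your sketch of the ``more direct'' route: you suggest a Zorn's-lemma maximal-preordering argument, whereas the paper's intended direct proof (visible in a commented-out block) instead invokes Proposition \ref{prop:ks} to replace $\S$ by scalar constraints $\tilde{\S}$, projects via $p$ to $\sper R$, and applies the classical abstract Positivstellensatz to get $-1 \in T_{\tilde{\S}}$ and hence $-\idn \in \T_{\tilde{\S}}^n \subseteq \T_\S^n$.
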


If we replace the set $\S$ in Corollary \ref{cor1} with the set $\S \cup \{-\F\}$, we get the following:

\begin{cor}(Nichtnegativsemidefinitheitsstellensatz)
\label{cor2}
For every finite subset $\S$ of $\snr$ and every $\F \in \snr$ the following are equivalent:
\begin{enumerate}
\item[(1)] For every real closed field $\kappa$ and  every $\phi \in V_\kappa(R)$
such that $\phi_n(\s)$ is positive semidefinite for every $\s \in \S$, we have
that $\phi_n(\F)$ is not negative semidefinite.
\item[(1')] Every ordering on $\mnr$ which contains $\S$ does not contain $-\F$.
\item[(2)] $-\idn \in \T_{\S \cup \{-\F\}}^n$.
\end{enumerate}
If $R$ is an affine $\RR$-algebra, then (1), (1') and (2) are equivalent to
\begin{enumerate}
\item[(1'')] For every $\phi \in V_\RR(R)$
such that $\phi_n(\s)$ is positive semidefinite for every $\s \in \S$, $\phi_n(\F)$ is not negative semidefinite.
\end{enumerate}
\end{cor}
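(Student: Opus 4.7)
The plan is to obtain Corollary \ref{cor2} as an immediate consequence of Corollary \ref{cor1} by substituting the finite set $\S \cup \{-\F\}$ for $\S$ in the latter. Since the paper itself signals this (\emph{``If we replace the set $\S$ in Corollary \ref{cor1} with the set $\S \cup \{-\F\}$\ldots''}), no new machinery is required; I only need to translate each of the three (respectively four) conditions of Corollary \ref{cor1} into the corresponding condition of Corollary \ref{cor2}.

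For (1), the hypothesis of Corollary \ref{cor1}(1) applied to $\S \cup \{-\F\}$ says that there is no homomorphism $\phi$ from $R$ into a real closed field $\kappa$ with $\phi_n(\s) \succeq 0$ for every $\s \in \S$ \emph{and} $\phi_n(-\F) \succeq 0$. Because $\phi_n$ is a $\ast$-ring homomorphism (in particular additive), $\phi_n(-\F) = -\phi_n(\F)$, so $\phi_n(-\F) \succeq 0$ is exactly the statement that $\phi_n(\F)$ is negative semidefinite. Negating the quantifier then gives precisely (1) of Corollary \ref{cor2}. The translation of (1') is equally direct: an ordering on $\mnr$ contains $-\F$ if and only if, when we adjoin $-\F$ to $\S$, it still contains the enlarged set, so the nonexistence of an ordering containing $\S \cup \{-\F\}$ is the same as the statement that no ordering containing $\S$ contains $-\F$. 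Condition \ref{cor1}(2) becomes \ref{cor2}(2) with no modification beyond rewriting $\T_{\S \cup \{-\F\}}^n$ in place of $\T_{\S'}^n$. The affine case (1'') is handled identically using $\phi_n(-\F) = -\phi_n(\F)$ over $\RR$.

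There is no serious obstacle: all of the content (invocation of Proposition \ref{prop:ks}, the abstract Positivstellensatz via the projection $p$, and the Artin--Lang passage from $V_\kappa(R)$ to $V_\RR(R)$ supplied by Theorem \ref{artinlang}) has been absorbed into Corollary \ref{cor1}. The only point that genuinely requires a line of verification is that the semantic flip ``$\phi_n(\F)$ is negative semidefinite'' $\Longleftrightarrow$ ``$\phi_n(-\F)$ is positive semidefinite'' is valid in every real closed field $\kappa$, which follows at once from linearity of $\phi_n$ and the fact that a symmetric matrix $A$ over $\kappa$ is negative semidefinite iff $-A$ is positive semidefinite.
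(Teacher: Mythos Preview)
Your proposal is correct and follows exactly the approach indicated by the paper: apply Corollary~\ref{cor1} to the enlarged set $\S \cup \{-\F\}$ and translate each condition using $\phi_n(-\F)=-\phi_n(\F)$. The paper itself gives no further argument beyond the one-line reduction you have spelled out.
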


By a slight modification of the proof of Theorem \ref{thm2} 
(replace the Krivine-Stengle Nichtnegativstellensatz for $R'$ with the Nullstellensatz)
we get the following generalization of 
Krivine-Stengle Nullstellensatz.

\begin{thm}(Semialgebraic Nullstellensatz)
\label{thm3}
For every finite subset $\S$ of $\snr$ and every $\F \in \snr$ the following are equivalent:
\begin{enumerate}
\item[(1)]
For every real closed field $\kappa$ and every $\phi \in V_\kappa(R)$
such that $\phi_n(\s)$ is positive semidefinite for every $\s \in \S$, we have
that $\phi_n(\F)=0$. 
\item[(1')]
For every ordering $Q \in \sper \mnr$ which contains $\S$, we have that $\F \in \supp \Q$.
\item[(2)] There exists $k \in \NN$ such that $-\F^{2k} \in \T_\S^n$.
\end{enumerate}
If $R$ is an affine $\RR$-algebra, then (1), (1') and (2) are equivalent to
\begin{enumerate}
\item[(1'')] For every $\phi \in V_\RR(R)$
such that $\phi_n(\s)$ is positive semidefinite for every $\s \in \S$, we have that $\phi_n(\F)=0$.
\end{enumerate}
\end{thm}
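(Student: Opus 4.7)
The plan is to follow the structure of the proof of Theorem \ref{thm2}, adjusting the one-variable Stellensatz that we invoke. The equivalence of (1) and (1') is immediate from the description of orderings on $\mnr$ given after Lemma \ref{indord}: an ordering $\Q$ on $\mnr$ records exactly the psd data of the associated homomorphism $(\phi_P)_n$, and $\F \in \supp \Q$ is equivalent to $(\phi_P)_n(\F)=0$. The equivalence of (1') and (1'') when $R$ is an affine $\RR$-algebra follows from Theorem \ref{artinlang} in the same way as in Theorem \ref{thm2}, since the condition "$\F \in \supp \Q$" is a finite boolean combination of conditions of the form "an entry of $\F$ lies in the support" and these are detected by a constructibly dense subset.

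For (2) $\Rightarrow$ (1), suppose $-\F^{2k} \in \T_\S^n$ and let $\phi \in V_\kappa(R)$ be such that $\phi_n(\s)$ is psd for all $\s \in \S$. Since $\phi_n(\T_\S^n) \subseteq \sn(\kappa)^2$, the matrix $-\phi_n(\F)^{2k}=\phi_n(-\F^{2k})$ is psd; but $\phi_n(\F)^{2k}=(\phi_n(\F)^k)^T \phi_n(\F)^k$ is also psd, so $\phi_n(\F)^{2k}=0$, whence $\phi_n(\F)=0$ since $\phi_n(\F)$ is symmetric over a real closed field.

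The main work is (1) $\Rightarrow$ (2), and the approach is the reduction trick already used in Theorem \ref{thm2}. By Proposition \ref{prop:ks} choose a finite subset $\tilde{\S} \subseteq \M_\S^n \cap \znr$ with $\K_{\tilde{\S}}^n=\K_\S^n$, and identify $\tilde{\S}$ with its image $p(\tilde{\S}) \subseteq R$. Form the ring $R':=R[\lambda]/(\det(\F-\lambda \idn))$ and let $\bar\lambda$ be the class of $\lambda$. I claim that for every homomorphism $\phi \colon R' \to \kappa$ into a real closed field with $\phi(g)\ge 0$ for all $g\in \tilde\S$, one has $\phi(\bar\lambda)=0$. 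Indeed, restricting to $R$ gives a homomorphism under which every element of $\tilde\S$ is nonnegative, hence by the choice of $\tilde\S$ and $\K_{\tilde\S}^n=\K_\S^n$ every $\phi_n(\s)$ ($\s \in \S$) is psd, so by (1) $\phi_n(\F)=0$; but $\phi(\bar\lambda)$ satisfies $\det(\phi_n(\F)-\phi(\bar\lambda)\idn)=0$, making it an eigenvalue of $\phi_n(\F)=0$, so $\phi(\bar\lambda)=0$.

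Applying the classical Krivine–Stengle Nullstellensatz (\cite[2.5.2]{mm}) in $R'$ to the element $\bar\lambda$ with constraints $\tilde\S$, there exists $l \in \NN$ with $-\bar\lambda^{2l} \in T_{\tilde\S}^{R'}$. Lift to $R[\lambda]$: there exist $t,t' \in T_{\tilde\S}^{R[\lambda]}$ with $-\lambda^{2l}=t'-t\cdot \det(\F-\lambda \idn)$, or equivalently $-\lambda^{2l}\equiv t' \pmod{\det(\F-\lambda \idn)}$ with $t'$ a sum of products of squares in $R[\lambda]$ and elements of $\tilde\S \subseteq \znr$. By Cayley–Hamilton the substitution $\lambda \mapsto \F$ annihilates the characteristic polynomial, so the natural ring homomorphism $R[\lambda] \to R[\F] \subseteq \mnr$ factors through $R'$; applying it yields $-\F^{2l}=t'(\F) \in \mnr$. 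Since every element of $R[\F]$ is symmetric (as $\F$ is symmetric and $R[\F]$ is commutative), squares in $R[\F]$ are $p(\F)^T p(\F) \in \Sigma_n(R)$, and $\tilde\S \subseteq \znr \subseteq \M_\S^n \subseteq \T_\S^n$, so $t'(\F) \in \T_\S^n$. This gives $-\F^{2l} \in \T_\S^n$, as required. The main obstacle, as with Theorem \ref{thm2}, is verifying that the substitution $\lambda \mapsto \F$ produces elements of $\T_\S^n$ and not merely of some ambient quadratic module; this is handled by the centrality of $\tilde\S$ and the commutativity of $R[\F]$.
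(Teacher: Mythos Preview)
Your argument is correct and follows precisely the approach the paper indicates: repeat the proof of Theorem~\ref{thm2} but invoke the abstract Nullstellensatz (rather than the Nichtnegativstellensatz) in $R'=R[\lambda]/(\det(\F-\lambda\idn))$ to obtain $-\bar\lambda^{2l}\in T_{\tilde\S}^{R'}$, and then substitute $\F$ via Cayley--Hamilton. The only cosmetic slip is the chain $\tilde\S\subseteq\znr\subseteq\M_\S^n$ (the middle inclusion is false); you mean $\tilde\S\subseteq\M_\S^n\cap\znr$, exactly as you wrote when introducing~$\tilde\S$.
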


For $n=1$, semialgebraic Nullstellensatz follows from the Nichtnegativstellensatz by replacing
$\F$ with $-\F^T \F$. This does not work here even if the element $\b$ from the
assertion (2) of Theorem \ref{thm2} is central, see Example \ref{ex2}.

\begin{cor}(Real Nullstellensatz)
For every finite subset $\S$ of $\mnr$ and every $\F \in \mnr$ the following are equivalent:
\begin{enumerate}
\item[(1)] 
For every real closed field $\kappa$ and for every $\phi \in V_\kappa(R)$
such that $\phi_n(\s)=0$ for every $\s \in \S$, we have
that $\phi_n(\F)=0$. 
\item[(1')] For every ordering $\Q \in \sper \mnr$ such that $\s^T \s \in \supp \Q$ 
for every $\s \in \S$, we have that $\F^T \F \in \supp \Q$.
\item[(2)] There exists $l \in \NN$ such that $-(\F^T \F)^{l} \in \Sigma_n(R)+\operatorname{ideal}(\S)$
where $\operatorname{ideal}(\S)$ is the two-sided ideal in $\mnr$ generated by  $\S$.
\end{enumerate}
If $R$ is an affine $\RR$-algebra, then (1), (1') and (2) are equivalent to
\begin{enumerate}
\item[(1'')] For every $\phi \in V_\RR(R)$
such that $\phi_n(\s)=0$ for every $\s \in \S$, we have that $\phi_n(\F)=0$.
\end{enumerate}
\end{cor}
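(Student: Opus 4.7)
The plan is to reduce the statement to the semialgebraic Nullstellensatz (Theorem \ref{thm3}). The starting observation is that, over any real closed field $\kappa$, a matrix $\m\in\mn(\kappa)$ satisfies $\m=0$ if and only if $\m^T\m=0$ (the diagonal entries of $\m^T\m$ are sums of squares of entries of the columns of $\m$), and this in turn is equivalent to $-\m^T\m\succeq 0$, since $\m^T\m$ is automatically positive semidefinite. Applying this to $\phi_n(\s)$ for each $\s\in\S$ and to $\phi_n(\F)$, assertion (1) of the corollary coincides with assertion (1) of Theorem \ref{thm3} for the symmetric data $\S_1:=\{-\s^T\s\mid \s\in\S\}\subseteq\snr$ and $\F_1:=\F^T\F\in\snr$. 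By Theorem \ref{thm3} this is in turn equivalent to the existence of some $k\in\NN$ with $-(\F^T\F)^{2k}\in\T^n_{\S_1}$.

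To pass from here to (2), I will show $\T^n_{\S_1}\subseteq\Sigma_n(R)+\operatorname{ideal}(\S)$. Every two-sided ideal of $\mnr$ has the form $\mn(J)$ for a unique ideal $J$ of $R$, and a routine verification identifies this $J$ (for $\operatorname{ideal}(\S)$) with the ideal of $R$ generated by the entries of the elements of $\S$. Then each $\s\in\S$ lies in $\mn(J)$, so $\s^T\s\in\mn(J)\cap\snr=\sn(J)$, whence $\S_1\subseteq\sn(J)$. Example \ref{sos} gives $\T^n_{\sn(J)}=\Sigma_n(R)+\sn(J)$, and therefore $\T^n_{\S_1}\subseteq\T^n_{\sn(J)}=\Sigma_n(R)+\sn(J)\subseteq\Sigma_n(R)+\operatorname{ideal}(\S)$. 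Setting $l=2k$ yields (2).

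For the converse (2) $\Rightarrow$ (1), write $-(\F^T\F)^l=\sigma+\alpha$ with $\sigma\in\Sigma_n(R)$ and $\alpha\in\operatorname{ideal}(\S)$, and let $\phi\in V_\kappa(R)$ satisfy $\phi_n(\s)=0$ for every $\s\in\S$. Since $\phi_n$ sends the two-sided ideal $\operatorname{ideal}(\S)$ to the zero ideal, $\phi_n(\alpha)=0$, so $-(\phi_n(\F)^T\phi_n(\F))^l=\phi_n(\sigma)\succeq 0$. But $(\phi_n(\F)^T\phi_n(\F))^l\succeq 0$ as well, so $(\phi_n(\F)^T\phi_n(\F))^l=0$; a nilpotent positive semidefinite matrix over a real closed field is zero (by the spectral theorem), so $\phi_n(\F)^T\phi_n(\F)=0$ and hence $\phi_n(\F)=0$. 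The equivalence (1) $\Leftrightarrow$ (1') is immediate from Lemma \ref{indord}: writing $\Q=\ind(P)\in\sper\mnr$ with $P=\phi^{-1}(\kappa^2)$ gives $\supp\Q=\ke\phi_n\cap\snr$, so $\s^T\s\in\supp\Q$ exactly when $\phi_n(\s)=0$, and analogously for $\F^T\F$. Finally (1) $\Leftrightarrow$ (1'') under the affine hypothesis is a routine application of Theorem \ref{artinlang}, exactly as in the other Stellens\"atze of this section. The main obstacle is the containment $\T^n_{\S_1}\subseteq\Sigma_n(R)+\operatorname{ideal}(\S)$; once the correspondence between two-sided ideals of $\mnr$ and ideals of $R$ is used to bring in Example \ref{sos}, the bridge between the matrix preordering on one side and the scalar ideal on the other is short, but it is precisely this bookkeeping that makes the reduction to Theorem \ref{thm3} effective.
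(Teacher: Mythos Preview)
Your proof is correct and follows the same route as the paper: reduce to Theorem~\ref{thm3} with $\S_1=\{-\s^T\s\mid\s\in\S\}$ and $\F_1=\F^T\F$, then push $\T^n_{\S_1}$ into $\Sigma_n(R)+\operatorname{ideal}(\S)$. Your justification of this last inclusion via the description $\operatorname{ideal}(\S)=\mn(J)$ and Example~\ref{sos} is in fact more explicit than the paper's, which simply asserts $\T^n_{\S'}\subseteq\Sigma_n(R)+\operatorname{ideal}(\S')\subseteq\Sigma_n(R)+\operatorname{ideal}(\S)$ without further comment.
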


\begin{proof}
Clearly, (1) is equivalent to the following:
\begin{enumerate}
\item[(*)] For every real closed field $\kappa$ and every 
$\phi \in V_\kappa(R)$ such that $\phi_n(\s^T \s)=0$ 
for every $\s \in \S$, we have that $\phi_n(\F^T \F)=0$.
\end{enumerate}
By the comment after Lemma \ref{indord}, (*) is equivalent to (1').
By Theorem \ref{artinlang}, (1') is equivalent to a special case
of (*) for $\kappa=\RR$. The latter is clearly equivalent to (1'').

For the implication (1) $\Rightarrow$ (2), we use Theorem \ref{thm3} with
$\S' =\{-\s^T \s \mid \s \in \S\}$ instead of $\S$ and $\F^T \F$ instead of $\F$. 
We get that for some positive integer $k$, $-(\F^T \F)^{2k} \in \T_{\S'}^n$.
Clearly, $\T_{\S'}^n \subseteq \Sigma_n(R)+\operatorname{ideal}(\S') 
\subseteq \Sigma_n(R)+\operatorname{ideal}(\S)$, hence 
$-(\F^T \F)^{2k} \in \Sigma_n(R)+\operatorname{ideal}(\S)$.

Conversely, suppose that (2) is true and pick a homomorphism
$\phi$ from $R$ into a real closed field $\kappa$ such that $\phi_n(\s)=0$
for every $\s \in \S$. It follows that $\phi_n(\Sigma_n(R)+\operatorname{ideal}(\S))
\subseteq \sn(\kappa)^2$. In particular,
$\phi_n(-(\F^\ast \F)^{2k}) \in \sn(\kappa)^2$, and so $\phi_n(\F^\ast \F)=0$,
proving (1).
\end{proof}

\end{document}